\numberwithin{equation}{section}
\theoremstyle{plain}
\newtheorem{theorem}{Theorem}
\theoremstyle{plain}
\newtheorem*{theorem0}{Theorem A}
\theoremstyle{plain}
\newtheorem*{conjecture0}{Conjecture A}
\theoremstyle{plain}
\newtheorem{lemma}{Lemma}
\theoremstyle{plain}
\newtheorem{claim}{Claim}
\theoremstyle{plain}
\theoremstyle{plain}
\newtheorem{corollary}{Corollary}
\theoremstyle{plain}
\newtheorem*{definition}{Definition}
\theoremstyle{definition}
\theoremstyle{definition}
\theoremstyle{definition}
\newtheorem*{case1}{Case 1}
\theoremstyle{definition}
\newtheorem*{case2}{Case 2}
\theoremstyle{definition}
\newtheorem*{case3}{Case 3}
\theoremstyle{definition}
\newtheorem*{case31}{Case 3.1}
\theoremstyle{definition}
\newtheorem*{case32}{Case 3.2}
\theoremstyle{definition}
\newtheorem*{case33}{Case 3.3}
\theoremstyle{definition}
\newtheorem*{case34}{Case 3.4}
\theoremstyle{definition}
\newtheorem*{case35}{Case 3.5}
\theoremstyle{definition}
\newtheorem*{case351}{Case 3.5.1}
\theoremstyle{definition}
\newtheorem*{case352}{Case 3.5.2}
\DeclareMathOperator{\ch}{ch}
\renewcommand{\le}{\leqslant}
\renewcommand{\ge}{\geqslant}
\title{Optimal Locating-Paired-Dominating Sets in King Grids}
\author{Yuxuan Yang}
\address{Department of Science, Beijing University of Posts and Telecommunications, Beijing, China}
\email{yangyx@bupt.edu.cn}
\keywords{locating-paired-dominating sets, king grids, domination, discharge method}
\subjclass[2010]{05C69}
\begin{document}

\begin{abstract}
In this paper, we continue the study of locating-paired-dominating set, abbreviated LPDS, in graphs introduced by McCoy and Henning. Given a finite or infinite graph $G=(V,E)$, a set $S\subset V$ is paired-dominating if the induced subgraph $G[S]$ has a perfect matching and every vertex in $V$ is adjacent to a vertex in $S$. The other condition for LPDS requires that for any distinct vertices $u,v \in V\backslash S$, we have $N(u)\cap S\neq N(v)\cap S$. Motivated by the conjecture of Kinawi, Hussain and Niepel, we prove the minimal density of LPDS in the king grid is between $8/37$ and $2/9$, and we find uncountable many different LPDS with density $2/9$ in the king grid. These results partially solve their conjecture.
\end{abstract}

\maketitle

\thispagestyle{empty}

%%%%%%%%%%
%
% SECTION 1
%
%%%%%%%%%%

\section{Introduction}\label{sec1}
The concept of locating-paired-dominating sets was introduced by McCoy and Henning \cite{lpds} as an extension of paired-dominating sets \cite{pds1,pds2}. The problem of locating monitoring devices in a system when every monitor is paired with a backup monitor serves as the motivation for this concept.

Here are some basic definitions and notations. Let $G=(V, E)$ be an undirected graph. For a subset $S\subset V$, $G[S]$ is a subgraph of $G$ induced by $S$. For a vertex $v$ in $G$, the set $N_G(v)=\{u\in V|uv\in E\}$ is called the \textit{open neighborhood} of $v$ and $N_G[v]=N_G(v) \cup \{v\}$ is the \textit{closed neighborhood} of $v$. We may use $N(v)$ and $N[v]$ for short if there is no ambiguity of the base graph $G$. And a set $S$ of vertices is called \textit{dominating set} of $G$ if for any $v\in V$, $N[v]\cap S \neq \emptyset$. A dominating set is called \textit{locating-dominating set} if for any pair of distinct vertices $v,u\in V\setminus S$, $N[u]\cap S\neq N[v]\cap S$. A set $M$ of edges is called \textit{matching} if they do not share vertices for each two. A matching $M$ is a \textit{perfect matching} if every vertex is incident to an edge of $M$. A \textit{paired-dominating set} of $G$ is a dominating set $S$ whose induced subgraph $G[S]$ contains a perfect matching (not necessarily induced). The set $S$ is said to be a \textit{locating-paired-dominating set}, abbreviated LPDS, if $S$ is both a paired-dominating set and a locating-dominating set. 

It is optimal to make the size of the dominating set as small as possible. The problem of finding an optimal locating-dominating set in an arbitrary graph is known to be NP-complete \cite{nphard}. Particular interest was dedicated to infinite grids as many processor networks have a grid topology \cite{pds6} \cite{pds3} \cite{pds4} \cite{pds5}. There was also the corresponding study for LPDS. For instance, LPDS in infinite square grids was discussed in \cite{lpdssqr}, and LPDS in infinite triangular and king grids was discussed in \cite{lpdsking}. Especially, Hussain, Niepel, and Kinawi guessed the density of an optimal LPDS in the king grid is equal to $2/9$ in \cite{lpdsking}, and they proved this number is between $3/14$ and $2/9$. Our results partially solve this conjecture. For example, we use the Discharge Method to improve the lower bound to $8/37$. See Section 3 for other main results.

%%%%%%%%%%%%%%%
%
% SECTION 2
%
%%%%%%%%%%%%%%%
\section{preliminaries}
In this section, we list several notations and backgrounds for this paper. Some of them come from \cite{lpdsking}.
\begin{definition}[king grid]
An infinite graph $G=(V,E)$ is called king grid, where $V=Z\times Z$. Two vertices are adjacent if and only if their Euclidean distance is less or equal to $\sqrt{2}$.
\end{definition}
Each vertex $v$ in the king grid has $8$ neighbors, and four of its neighbors are with Euclidean distance $\sqrt{2}$ from $v$. We call them \textit{$\sqrt{2}$-neighbors} of $v$. If two $\sqrt{2}$-neighbors of $v$ have distance $2\sqrt{2}$, we call them in the \textit{opposite position} of each other. The \textit{distance} of two vertices $u,v$ of a graph is the number of the edges in the shortest path connecting $u$ and $v$, which is denoted by $d(u,v)$.
\begin{definition}[k-neighborhood]
The k-neighborhood of vertex $u$ in graph $G$ is defined as $N_G^k[u]=\{x\in V(G)|d(u,x)\leq k\}$.
\end{definition}
To avoid unnecessary complexity, in this paper when we consider infinite graphs, we always assume there is a universal upper bound $M\in \mathbb{Z}_+$ of the degree, which means for all $v\in V$, $|N_G(v)|\le M$. The king grid satisfies this condition for sure.
\begin{definition}[density]
Given a graph $G=(V,E)$, which can be finite or infinite, and a set $S\subset V$, the density of $S$ in $G$ is defined as 
\begin{equation}
\label{term1}
    D(S)=\limsup_{k\rightarrow \infty} \frac{|S\cap N_G^k[u]|}{|N_G^k[u]|},
\end{equation}
where $u$ is a vertex of G.
\end{definition}
Note that the density $D(S)$ is independent of the choice of the vertex $u$. 
%Similarly, if we have a function 
%\begin{equation*}
%    \ch: V\rightarrow \Rr
%\end{equation*}
%on the vertex set of an infinite graph, we can define the average value of this function as
%\begin{equation*}
%    \limsup_{k\rightarrow \infty} \frac{\sum_{v\in N_G^k[u]}\ch(v)}{|N_G^k[u]|}.
%\end{equation*}

Hussain, Niepel, and Kinawi have proved the following result for the density of an optimal LPDS in the king grid \cite{lpdsking}. Here, the term ``optimal" means the LPDS has minimal density. 

\begin{theorem0}
\label{thm0}
If $S$ is an optimal LPDS in the king grid and $D(S)$ is the density of $S$, then we have $3/14\le D(S)\le 2/9$.
\end{theorem0}

Also, they gave a conjecture for this density \cite{lpdsking}.

\begin{conjecture0}
\label{conj}
If $S$ is an optimal LPDS in the king grid and $D(S)$ is the density of $S$, then we have $D(S)= 2/9$.
\end{conjecture0}
In this paper, we always assume $G=(V,E)$ is the king grid and $S$ is a LPDS of $G$. For convenience, we use the following notations. Fix an arbitrary perfect matching $M$ on the induced subgraph $G[S]$. Denote
\begin{equation}
\label{term2}
    m:S\rightarrow S
\end{equation}
to be a map that sends each vertex to its partner in $M$. In other words, we have $N_M(v)=\{m(v)\}$. Because of the lack of edge-transitivity in the king grids, we have two different types of pairs in $M$. We define
\begin{equation}
\label{term3}
    \begin{aligned}
    S_1:=\{v\in S|\lvert N(v)\cap N(m(v))\rvert=2\},\\
    S_2:=\{v\in S|\lvert N(v)\cap N(m(v))\rvert=4\}.
    \end{aligned}
\end{equation}
Remark that each pair in $S_1$ has Euclidean distance $\sqrt{2}$ and each pair in $S_2$ has Euclidean distance $1$, so we call them far pairs and close pairs, respectively. It's trivial to find $S=S_1\cup S_2$ and $S_1\cap S_2=\emptyset$.

%%%%%%%%%%%%%%%
%
% SECTION 3
%
%%%%%%%%%%%%%%%
\section{main results}
Suppose $S$ is a LPDS of the king grid $G$, under the notation \eqref{term3} in the last section we have the following observation for the density of $S_1$ and $S_2$. 
\begin{theorem}
\label{thm1}
Suppose $S_1$ and $S_2$ have density $D(S_1)$ and $D(S_2)$ in $G$, then
\begin{equation}
\label{eqthm1}
    \frac{14}{3} D(S_1)+\frac{9}{2} D(S_2)\ge 1.
\end{equation}
\end{theorem}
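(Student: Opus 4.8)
The plan is to set up a weighting/discharging argument that distributes a unit of ``charge'' from every vertex of $V$ onto the matched pairs of $S$, and then show that a far pair can only absorb a bounded amount of charge while a close pair absorbs a slightly larger bounded amount. Concretely, I would give each vertex $v\in V$ an initial charge of $1$, then redistribute: each $v$ sends its charge equally among the pairs $\{s,m(s)\}$ of $S$ that dominate it, i.e.\ among the pairs meeting $N[v]$. Since every vertex is dominated (indeed, for $v\notin S$ the locating condition forces $N(v)\cap S$ to be a nonempty ``code'' distinguishing $v$ from all other non-$S$ vertices), each vertex has at least one pair to send to, so the total charge $\sum_{v}1 = |V|$ is conserved and equals the total charge received by all pairs in $M$. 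The goal is then the local bound: a far pair receives at most $14/3$ units and a close pair at most $9/2$ units; summing over all pairs and passing to densities (dividing by $|N_G^k[u]|$ and taking $\limsup$, using that $M$ partitions $S$ into $\tfrac12|S_1|$ far pairs and $\tfrac12|S_2|$ close pairs) yields $\tfrac{14}{3}D(S_1) + \tfrac{9}{2}D(S_2) \ge 1$, which is \eqref{eqthm1} after noting $D(S_i)$ corresponds to the vertex-density of $S_i$ and each pair contributes two vertices.

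The heart of the matter is the local count. Fix a far pair $P=\{a,b\}$ with $a,b$ at Euclidean distance $\sqrt2$, and let $U(P)=\{v\in V : N[v]\cap P\neq\emptyset\}$ be the set of vertices it can receive charge from; similarly for a close pair. First I would compute $|U(P)|$ exactly from the geometry of the king grid: for a far pair this neighborhood region has a fixed size (the union of two $3\times3$ closed king-neighborhoods whose centers are $\sqrt2$ apart), and for a close pair another fixed size (centers at distance $1$). Then the key point is that $P$ does not get a full unit from most of these vertices: a vertex $v\in U(P)$ sends $P$ exactly $1/p(v)$ where $p(v)$ is the number of matched pairs dominating $v$, so I must show that ``enough'' vertices in $U(P)$ are necessarily dominated by a second pair, forcing $p(v)\ge 2$ for them. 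This is where the LPDS hypotheses — paired domination and especially the locating (separation) condition — must be exploited: if too many vertices of $U(P)$ were dominated \emph{only} by $P$, several distinct vertices $u,v\notin S$ would see the same code $N(u)\cap P = N(v)\cap P$, violating the locating property. A careful case analysis on which subsets of $P$'s king-neighborhood can appear as codes (there are only a few, and pairs of vertices realizing the same one must coincide) pins down how many vertices can be ``privately'' dominated by $P$, and hence caps the charge at $14/3$ (far) and $9/2$ (close).

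The main obstacle I anticipate is precisely this local optimization: getting the constants $14/3$ and $9/2$ rather than something weaker requires being tight about the worst-case configuration of the \emph{other} elements of $S$ around $P$ — one cannot assume anything about them except that, together with $P$, they dominate and locate everything. So the argument likely proceeds by contradiction within a bounded window: assume a pair $P$ receives more than its allotted charge, deduce that a large number of vertices near $P$ are $P$-privately dominated, enumerate the few possible codes these vertices can carry (subsets of $N(v)\cap P$, which has at most the couple of cells where $N[v]$ meets $\{a,b\}$), and use pigeonhole against the locating condition to reach a contradiction; the close-pair case is handled analogously but with the larger overlap $|N(a)\cap N(b)|=4$ accounting for the different constant. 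A secondary technical point is the standard but necessary check that the finite/limit bookkeeping works — boundary pairs near $N_G^k[u]$ contribute a vanishing fraction as $k\to\infty$ because the degree is uniformly bounded by $M$ — which I would dispatch in a short paragraph using the definition \eqref{term1} of density.
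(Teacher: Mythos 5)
There is a genuine gap, and it sits exactly at the ``local optimization'' you flag; in fact your splitting rule cannot produce the constants of \eqref{eqthm1}. First, a bookkeeping point: with your rule (each vertex of $V$ splits one unit equally among the matched pairs meeting its closed neighborhood), the conserved quantity is one unit per vertex, while far and close pairs occur with densities $D(S_1)/2$ and $D(S_2)/2$; so to reach $\frac{14}{3}D(S_1)+\frac{9}{2}D(S_2)\ge 1$ you must cap the intake of a far pair by $28/3$ and of a close pair by $9$, not by $14/3$ and $9/2$ as written. (Per-pair caps of $14/3$ and $9/2$ would prove $\frac{7}{3}D(S_1)+\frac{9}{4}D(S_2)\ge 1$, which is false: the all-far-pair LPDS of density $2/9$ gives $\frac{7}{3}\cdot\frac{2}{9}=\frac{14}{27}<1$.) Second, and more seriously, the needed cap $28/3$ for a far pair $P=\{a,b\}$ is not provable under your rule. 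Your pigeonhole on codes yields exactly: at most one vertex with code $\{a\}$, one with code $\{b\}$, one with code $\{a,b\}$, and $p(u)\ge 2$ for every other vertex of $U(P)$ (which has $14$ elements, including $a,b$ themselves, which may each be dominated by $P$ alone). This bounds the intake only by $2+1+\frac{1}{2}+1+1+8\cdot\frac{1}{2}=\frac{19}{2}>\frac{28}{3}$.

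The reason $\frac{19}{2}$ cannot be improved within your scheme is that the structural fact which rescues the sharper constant --- Lemma \ref{lem1}(3), i.e.\ every far-pair endpoint has a pendant neighbor lying in $T_3\cup S$ --- is worthless when charge is split by the number of dominating \emph{pairs}: that forced pendant neighbor may have its two additional $S$-neighbors be partners of each other, so it is dominated by only two pairs and still sends $\frac{1}{2}$ to $P$. A local configuration realizing the worst case (say $a=(0,0)$, $b=(1,1)$, no other $S$-vertex adjacent to $a$ or $b$, $(1,0)$ with code $\{a,b\}$, $(0,-1)$ with code $\{a\}$, and $(-1,1)$ adjacent to both members of another pair such as $\{(-1,2),(-2,2)\}$, with the three remaining pendant neighbors of $a$ each seeing exactly one further pair) satisfies every locating constraint you could invoke, so no case analysis of codes pushes the far-pair intake below $\frac{19}{2}$. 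The paper avoids this by splitting each dominated vertex's unit according to the number of its individual $S$-neighbors capped at three (each $v\in S$ sends $1/i$ to each neighbor in $T_i$), so the forced $T_3\cup S$ pendant neighbor drains only $\frac{1}{3}$ (or nothing) from the endpoint; that finer rule, combined with all three parts of Lemma \ref{lem1}, is precisely what gives $14/3$ per vertex of $S_1$ and $9/2$ per vertex of $S_2$. Your proposal is missing both the finer splitting and the Lemma \ref{lem1}(3)-type fact, and with the pair-based split the theorem's constants are out of reach (you would obtain an inequality with a coefficient of at least $\frac{19}{4}$ on $D(S_1)$ instead of $\frac{14}{3}$).
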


\begin{theorem}
\label{thm2}
Suppose $S_1$ and $S_2$ have density $D(S_1)$ and $D(S_2)$ in $G$, then
\begin{equation}
\label{eqthm2}
    \frac{9}{2} D(S_1)+5 D(S_2)\ge 1.
\end{equation}
\end{theorem}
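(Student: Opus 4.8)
The plan is to prove Theorem~\ref{thm2} by a discharging argument on the king grid $G$, in the same spirit as Theorem~\ref{thm1} but with a redistribution tuned to the target coefficients $9/2$ and $5$. First I would fix the perfect matching $M$ and the partition $S=S_1\cup S_2$ as in \eqref{term3}, assign every vertex of $V$ an initial charge of $1$, and design discharging rules that move all charge onto $S$ so that, after the process, every vertex of $S_1$ carries charge at most $9/2$ and every vertex of $S_2$ carries charge at most $5$. Since discharging conserves total charge, summing over a ball $N_G^k[u]$ and letting $k\to\infty$ (the boundary contributes a vanishing fraction because the degree is bounded) yields $1\le \tfrac92 D(S_1)+5D(S_2)$, which is exactly \eqref{eqthm2}. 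Equivalently, one must show each far pair absorbs charge at most $9$ and each close pair at most $10$.

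The basic rule I would use is: a vertex $u\in V\setminus S$ keeps nothing and sends $1/|N[u]\cap S|$ to each codeword dominating it, while a vertex of $S$ keeps its own unit. The decisive input is the locating condition, exploited through the following local observation: for a fixed pair $p=\{a,m(a)\}$ only the three sets $\{a\}$, $\{m(a)\}$, $\{a,m(a)\}$ are nonempty subsets of $p$, so at most three vertices of $V\setminus S$ can be dominated \emph{exclusively} by $p$; every other vertex of the joint neighbourhood $N[a]\cup N[m(a)]$ is dominated by at least one further codeword and forwards at most $2/3$ (and usually only $1/2$) of its charge to $p$. Combined with $|N[a]\cup N[m(a)]|=14$ for a far pair and $12$ for a close pair, and with the fact that the two halves of $p$ have only two common non-codeword neighbours in the far case and four in the close case (cf.\ \eqref{term3}), this already controls the charge of most pairs; the close-pair bound $10$ is then essentially immediate, and for far pairs the count lands just above $9$.

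The hard part will be the configurations where these naive estimates fall short — principally a far pair whose joint neighbourhood realizes all three exclusive traces together with several doubly-dominated but non-exclusive vertices. Here I would add a second stage of discharging in which an over-loaded codeword passes a fixed amount to an adjacent codeword, or in which a vertex dominated by both halves of a pair and by an outside codeword is forced to route its charge to the outside codeword; checking that every such transfer leaves the recipient within budget requires a case analysis on the arrangement of codewords in the $N^2$-neighbourhood of a pair, organised by how many codewords occur there and in which relative positions. The subtlety is that one must keep far pairs below $9$ and close pairs below $10$ \emph{simultaneously}: squeezing far pairs harder would overload close pairs, which is exactly why Theorem~\ref{thm2} carries the weaker coefficient $5$ on $D(S_2)$ than the $9/2$ in Theorem~\ref{thm1}, and why the two inequalities are proved by genuinely different redistributions and only afterwards combined.
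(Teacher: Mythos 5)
Your setup is a legitimate dual of the paper's discharging (the paper puts charge $9/2$ on $S_1$, $5$ on $S_2$, $0$ elsewhere and pushes it out until everything reaches $1$; you put $1$ everywhere and pull it onto $S$ subject to budgets $9$ per far pair and $10$ per close pair), and your easy estimates are right: the locating condition allows at most three traces $\{a\},\{m(a)\},\{a,m(a)\}$, the close-pair budget $10$ follows from the naive count, and the far-pair count does not close. But quantify what the naive count gives for a far pair: with $|I(v)|=2$, $|P(v)|=5$, Lemma~\ref{lem1}(1)--(3) one gets roughly $2+3+\tfrac23+\tfrac13+\tfrac13+3=\tfrac{28}{3}$, i.e.\ exactly $\tfrac{14}{3}$ per $S_1$-vertex --- which is Theorem~\ref{thm1}, already known. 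The entire content of Theorem~\ref{thm2} is the improvement from $\tfrac{14}{3}$ to $\tfrac92$ on far pairs, and at precisely this point your proposal stops being a proof: ``I would add a second stage of discharging \dots\ checking requires a case analysis'' specifies neither the rules nor the verification, so the key step is missing.

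Moreover, the second-stage transfers you do sketch (codeword to adjacent codeword; rerouting at a vertex dominated by both halves of a pair plus an outside codeword) do not match the configurations that actually cause the overload. In the paper the critical vertices are $u_0\in T_3\setminus I$ dominated by three codewords belonging to three \emph{different} far pairs, pairwise non-adjacent (its Case 3), where each of the three pairs is individually tight; the resolution there needs (i) a capped per-neighbor rate $r(v)=\min\{(\ch_3(v)-1)/p_3(v),\tfrac12\}$, (ii) the structural claim that two codewords in positions $(a\pm1,b)$ satisfy $r(v_1)+r(v_2)\ge\tfrac12$ (proved again via the locating condition), and (iii) a third stage in which ``deficient'' vertices receive charge from ``rich friends'' that can lie at graph distance up to $3$ from $u_0$, or can even be non-codeword interval vertices, together with a global check that no rich friend serves too many deficient vertices. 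Transfers confined to adjacent codewords, or rerouting at vertices seen by both halves of one pair, cannot reach these configurations, so as proposed the far-pair budget of $9$ would not be met. Until you supply explicit rules handling these triply-dominated vertices and verify the donors stay within budget (or find a genuinely different mechanism), the proposal establishes only the bound of Theorem~\ref{thm1}, not \eqref{eqthm2}.
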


Remark that Theorem \ref{thm1} has almost been indicated in \cite{lpdsking}, in which the idea called ``share" was used. We prove both Theorem \ref{thm1} and Theorem \ref{thm2} by using \textit{Discharge Method}. The latter one is the main work of this paper.

From these two theorems, we can easily improve the lower bound in Theorem A.
\begin{corollary}
The density of an optimal LPDS in the king grid is at least 8/37.
\end{corollary}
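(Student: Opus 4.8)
The corollary follows by combining Theorem~\ref{thm1} and Theorem~\ref{thm2} with the obvious relation $D(S_1)+D(S_2)=D(S)$. Here is the plan.

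\textbf{Setup.} Write $x=D(S_1)$ and $y=D(S_2)$, so that $D(S)=x+y$ with $x,y\ge 0$. Theorem~\ref{thm1} gives $\tfrac{14}{3}x+\tfrac{9}{2}y\ge 1$ and Theorem~\ref{thm2} gives $\tfrac{9}{2}x+5y\ge 1$. The goal is to show $x+y\ge 8/37$ for every point $(x,y)$ in the intersection of these two half-planes (restricted to the first quadrant). This is a two-variable linear program: minimize $x+y$ subject to the two constraints (and $x,y\ge 0$).

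\textbf{Solving the linear program.} First I would check that at the optimum both constraints are tight; one verifies that the individual constraints alone give weaker bounds (e.g. setting $y=0$ in the first constraint gives $x\ge 3/14$, and $x=0$ in the second gives $y\ge 1/5$, and $3/14,1/5<8/37$), so the minimum of $x+y$ is attained at the vertex where the two lines $\tfrac{14}{3}x+\tfrac{9}{2}y=1$ and $\tfrac{9}{2}x+5y=1$ intersect. Solving this $2\times 2$ linear system yields a unique point $(x_0,y_0)$ with $x_0,y_0\ge 0$, and then $x_0+y_0=8/37$. (Concretely, a convenient way to present this without case analysis is to exhibit nonnegative multipliers $\lambda,\mu$ with $\lambda+\mu$ applied to the two inequalities producing exactly $c(x+y)\ge \lambda+\mu$ with $(\lambda+\mu)/c=8/37$; that is, find $\lambda,\mu\ge 0$ such that $\lambda\cdot\tfrac{14}{3}+\mu\cdot\tfrac{9}{2}=\lambda\cdot\tfrac{9}{2}+\mu\cdot 5$, and then $x+y\ge \tfrac{\lambda+\mu}{\lambda\tfrac{14}{3}+\mu\tfrac{9}{2}}=\tfrac{8}{37}$.) Either presentation is a routine computation.

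\textbf{Conclusion.} Since $S$ is a LPDS, Theorems~\ref{thm1} and~\ref{thm2} apply to $D(S_1)$ and $D(S_2)$, so $(D(S_1),D(S_2))$ satisfies both constraints; hence $D(S)=D(S_1)+D(S_2)\ge 8/37$. There is essentially no obstacle here: the only thing to be careful about is confirming that the LP optimum really is at the intersection of the two Theorem constraints rather than on a coordinate axis, which amounts to the numerical checks above. The substantive content of the paper is in proving Theorem~\ref{thm1} and especially Theorem~\ref{thm2} via the discharge method; the corollary is just the elementary combination of the two.
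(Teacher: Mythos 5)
Your proposal is correct and essentially matches the paper: the paper proves the corollary precisely by the nonnegative-multiplier combination you describe, taking $3$ times the inequality of Theorem~\ref{thm1} plus $1$ times that of Theorem~\ref{thm2}, so that $D(S)=D(S_1)+D(S_2)\ge \frac{3+1}{37/2}=\frac{8}{37}$. Your LP framing and the tightness checks are just a slightly more elaborate presentation of the same elementary combination.
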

\begin{proof}
Suppose $S$ is a LPDS in $G$, then we have
\begin{equation*}
    \begin{aligned}
     D(S)&=D(S_1)+D(S_2)\\
     &=\frac{3[\frac{14}{3}D(S_1)+\frac{9}{2}D(S_2)]+[\frac{9}{2}D(S_1)+5D(S_2)]}{37/2}\\
     &\ge\frac{3+1}{37/2}\\
     &=\frac{8}{37}.
    \end{aligned}
\end{equation*}
\end{proof}

Moreover, if we have a counterexample for Conjecture A, the fair pairs and close pairs must both have positive density.
\begin{corollary}
  Suppose $S$ is a LPDS in $G$ and $D(S)<2/9$, then $D(S_1)>0$ and $D(S_2)>0$.
\end{corollary}
\begin{proof}
\begin{equation*}
\begin{aligned}
    1&\le \frac{14}{3}D(S_1)+\frac{9}{2}D(S_2)\\
    &=\frac{14}{3}D(S_1)+\frac{9}{2}(D(S)-D(S_1))\\
    &=\frac{9}{2}D(S)+\frac{1}{6}D(S_1)\\
    &< \frac{9}{2}\cdot \frac{2}{9}+\frac{1}{6}D(S_1).\\
    1&\le \frac{9}{2}D(S_1)+5D(S_2)\\
    &=\frac{9}{2}(D(S)-D(S_2))+5D(S_2)\\
    &=\frac{9}{2}D(S)+\frac{1}{2}D(S_2)\\
    &< \frac{9}{2}\cdot \frac{2}{9}+\frac{1}{2}D(S_2).
\end{aligned}
\end{equation*}
\end{proof}

In other words, a LPDS in the king grid has density at least $9/2$, if the pairs in this LPDS are either all far pairs or all close pairs.

All the results above are for the lower bound of the density of an optimal LPDS in the king grid. For the upper bound, after a lot of attempts, we believe that Conjecture A is true, which means the upper bound cannot be improved. However, we find the LPDS with density 2/9 given by Hussain, Niepel, and Kinawi \cite{lpdsking} is not the only pattern.

\begin{theorem}
\label{thm4}
There are uncountable many different LPDS with density 2/9 in the king grid.
\end{theorem}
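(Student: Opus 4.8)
The plan is to produce a $2^{\aleph_0}$-sized family of LPDS of density $2/9$ by presenting a density-$2/9$ configuration as a ``loose stack'' of horizontal strips whose relative horizontal positions are free. First I would reorganize a density-$2/9$ LPDS (the one from \cite{lpdsking}, or a mild variant) as follows: partition $V=\mathbb{Z}\times\mathbb{Z}$ into horizontal strips $T_j=\mathbb{Z}\times\{3j,3j+1,3j+2\}$, and arrange that $S\cap T_j$ is the horizontal translate by an integer offset $a_j$ of a fixed, horizontally periodic in-strip pattern $P$, where $P$ is a union of pairs (a matching) of relative density $2/9$. Two features make this presentation useful. Each strip contributes density exactly $2/9$ for \emph{every} value of $a_j$, so for any offset sequence $(a_j)_{j\in\mathbb{Z}}$ the set $S_{(a_j)}$ has density exactly $2/9$: the ball $N^k[u]$ is a $(2k{+}1)\times(2k{+}1)$ box, it meets $\frac{2k+1}{3}+O(1)$ strips, each in $\frac{2(2k+1)}{3}+O(1)$ vertices uniformly in the offsets, so $|S_{(a_j)}\cap N^k[u]|=\frac{2}{9}(2k{+}1)^2+O(k)$. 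And paired-domination is for free: the union over all strips of the translated pairs of $P$ is a perfect matching of $G[S_{(a_j)}]$, so once $S_{(a_j)}$ is shown dominating it is automatically paired-dominating.

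It remains to control domination and the locating condition. Both are \emph{local}: a vertex is dominated by what lies within distance $1$, and if $u,v\notin S$ have $N(u)\cap S=N(v)\cap S$ then this common set is nonempty (each of $u,v$ is dominated and lies outside $S$), so $u$ and $v$ share an $S$-neighbour and hence $d(u,v)\le 2$. Therefore ``$S_{(a_j)}$ is an LPDS'' is equivalent to a finite list of constraints each involving only a bounded number of consecutive strips; by translation-invariance of the king grid these depend only on the increments $d_j:=a_{j+1}-a_j$, i.e.\ they cut out a subshift of finite type in the sequence space of the $d_j$. The concrete task is to pick $P$ and its in-strip placement so that (i) the periodic choice $d_j\equiv d_0$ already gives a valid density-$2/9$ LPDS (the base case, matching the known construction), and (ii) there are two distinct increments $d',d''$ such that \emph{every} sequence $(d_j)$ with entries in $\{d',d''\}$ is admissible. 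Granting this, fix $a_0=0$; then distinct sequences in $\{d',d''\}^{\mathbb{Z}}$ yield distinct offset sequences, hence distinct subsets $S_{(a_j)}$, all of which are LPDS of density $2/9$. That is uncountably many; and since the automorphism group of the king grid is countable (translations composed with the eight symmetries of the square), uncountably many of them are even pairwise non-isomorphic, not merely distinct as sets.

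I expect the main obstacle to be items (i)--(ii): exhibiting a strip presentation of a $2/9$ LPDS with genuine slack. The pattern of \cite{lpdsking} may well be rigid, admitting only one shear, so one probably has to adjust $P$ --- e.g.\ by enlarging its horizontal period, or using a slightly non-uniform placement of the pairs --- to open up at least two admissible increments while preserving the count $2/9$. Verifying (ii) is then a finite but delicate case analysis: domination across a seam between two strips is easy to guarantee with a little slack, but checking the locating condition for every vertex pair straddling a seam and for every admissible combination of neighbouring offsets is the part that needs real care, since that is exactly where the freedom of the construction is most likely to collide with the indistinguishability forbidden by the definition of an LPDS.
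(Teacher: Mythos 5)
Your overall strategy is the same one the paper uses: take a periodic density-$2/9$ configuration, cut the grid into parallel strips, and shift the strips independently by a bounded offset, noting that the density is unaffected, that the in-strip pairs give the perfect matching for free, that domination and the locating property are local conditions (your observation that indistinguishable vertices outside $S$ are within distance $2$ is correct), and that countability of the automorphism group upgrades ``uncountably many sets'' to ``uncountably many non-isomorphic sets'' (the paper makes the same remark). The injectivity and density bookkeeping in your write-up are fine.

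The genuine gap is that you never produce the object on which the whole theorem rests: a concrete pattern $P$ with at least two admissible increments at every seam. You explicitly flag this yourself --- you suspect the pattern of \cite{lpdsking} is rigid (correctly; $L_1$, generated by $(2,1)$ and $(-3,3)$, does not decompose into independently shiftable strips) and you defer both the modified pattern and the ``finite but delicate case analysis'' of the locating condition across seams. But that existence claim is essentially the entire content of Theorem \ref{thm4}; everything else in your argument is routine. The paper closes exactly this gap by exhibiting the new tile $L_0=\{(0,0),(0,3),(2,2),(3,1),(4,3),(5,0),(7,1),(7,2)\}$, tiling it by the lattice generated by $(9,0)$ and $(0,4)$ to get $L_2$, and then showing that each width-$9$ vertical block may independently be translated by $(0,1)$ or not (the sets $L_X$, $X\subset\mathbb{Z}$), with the LPDS property surviving every choice. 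Until you supply an explicit pattern playing the role of $L_0$ and verify domination and locating across all seam configurations, your proposal is a plausible plan rather than a proof.
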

We leave the construction of those patterns in Section 5.
%%%%%%%%%%%%%%%
%
% SECTION 4
%
%%%%%%%%%%%%%%%
\section{Discharge Methods}
In this section, we prove Theorems \ref{thm1} and \ref{thm2} using Discharge Method. 
Suppose $S$ is a LPDS of the king grid $G=(V,E)$, we follow the notation \eqref{term3} about $S_1, S_2$. For each $v\in S$, we define its pendent neighbors and interval neighbors as
\begin{equation}
\label{term4}
    \begin{aligned}
     &P(v):=N(v)\backslash N[m(v)],\\
     &I(v):=N(v)\cap N(m(v)).
    \end{aligned}
\end{equation}
It is trivial to see that when $v\in S_1$, we have $\lvert P(v)\rvert =5 $ and $\lvert I(v) \rvert =2$. Also, when $v\in S_2$, we have $\lvert P(v)\rvert =3 $ and $\lvert I(v) \rvert =4$.

For each $v\in V\backslash S$, we assign it to one of the following sets according to the size of the set $N(v) \cap S$.
\begin{equation}
\label{term5}
    \begin{aligned}
     T_1:=\{v\in V\backslash S | \lvert N(v) \cap S\rvert =1\},\\
     T_2:=\{v\in V\backslash S | \lvert N(v) \cap S\rvert =2\},\\
     T_3:=\{v\in V\backslash S | \lvert N(v) \cap S\rvert \ge 3\}.
    \end{aligned}
\end{equation}
We have $V=T_1\cup T_2\cup T_3\cup S$, since $S$ is dominating.
At first, we investigate the neighbors for each dominating vertex $v\in S$.
\begin{lemma}
\label{lem1}
We have the following observations.

(1) $\forall v\in S, \lvert P(v)\cap T_1\rvert \le 1$,

(2) $\forall v\in S,\lvert I(v) \cap T_2\rvert \le 1, I(v) \cap T_1=\emptyset$,

(3) $\forall v\in S_1, \lvert P(v) \cap (T_3\cup S)\rvert \ge 1$.
\end{lemma}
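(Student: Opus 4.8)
The plan is to argue each of the three statements by a short local case analysis using the two defining properties of an LPDS: the paired-domination condition (which controls how $P(v), I(v)$ sit relative to $m(v)$) and, crucially, the locating condition $N(u)\cap S \neq N(w)\cap S$ for distinct $u,w \in V\setminus S$.

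For part (1), I would suppose toward a contradiction that two distinct vertices $u,w \in P(v)\cap T_1$. Since $u \in T_1$ means $|N(u)\cap S|=1$, and $v \in N(u)\cap S$, we get $N(u)\cap S = \{v\}$; likewise $N(w)\cap S=\{v\}$. Hence $N(u)\cap S = N(w)\cap S$, contradicting the locating property. So at most one pendent neighbor of $v$ lies in $T_1$. For part (2), the claim $I(v)\cap T_1 = \emptyset$ is immediate: any $x \in I(v) = N(v)\cap N(m(v))$ is adjacent to \emph{both} $v$ and $m(v)$, two distinct vertices of $S$, so $|N(x)\cap S|\ge 2$ and $x \notin T_1$. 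For the bound $|I(v)\cap T_2|\le 1$, note that if $x \in I(v)\cap T_2$ then $N(x)\cap S = \{v, m(v)\}$ exactly (it contains $v$ and $m(v)$ and has size $2$); so any two such vertices would again share the same trace on $S$, violating locating. This handles (2). The only subtlety is making sure one correctly recalls that membership in $I(v)$ forces adjacency to $m(v)$ as well as $v$, which is just the definition \eqref{term4}.

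Part (3) is the substantive one and where I expect the real work. Let $v \in S_1$, so $v$ and $m(v)$ are a far pair at Euclidean distance $\sqrt 2$, $|P(v)|=5$, $|I(v)|=2$. Suppose for contradiction that every vertex of $P(v)$ lies in $T_1 \cup T_2$. Combined with part (1), at most one of the five pendent neighbors is in $T_1$, so at least four of them are in $T_2$. A vertex $x \in P(v)\cap T_2$ has $N(x)\cap S = \{v, s_x\}$ for a unique second dominating vertex $s_x \neq v$, and $s_x \notin N[m(v)]$-region considerations apply. The idea is to place coordinates so that, say, $v=(0,0)$ and $m(v)=(1,1)$; then $P(v)=\{(-1,-1),(-1,0),(-1,1),(0,-1),(1,-1)\}$ and $I(v)=\{(0,1),(1,0)\}$. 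For each pendent neighbor $x$ in $T_2$ we obtain a forced second $S$-vertex $s_x$ outside a controlled region, and one shows these forced vertices, together with the locating requirement that distinct pendent $T_2$-vertices cannot have equal $S$-traces, over-constrain the configuration: two of the pendent vertices end up forced to have the same pair $\{v, s\}$, or a forced $S$-vertex lands inside $N[m(v)]$ (contradicting $x \in P(v)$, i.e. $x \notin N[m(v)]$, via adjacency relations), or some vertex of $I(v)$ is pushed out of $T_1\cup T_2$. I would organize this as a short enumeration over which pendent neighbors lie in $T_1$ versus $T_2$ (five cases up to the symmetry of the far pair, which has a reflection exchanging the two "ends"), and in each case exhibit the contradiction. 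The main obstacle is simply being careful with the geometry: tracking which lattice points are adjacent to which, and ensuring the forced second-dominators of the pendent $T_2$-vertices cannot be reused or hidden inside already-accounted neighborhoods; no deep idea is needed beyond the locating property, but the bookkeeping must be exhaustive.
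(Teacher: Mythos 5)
Your parts (1) and (2) are correct and are exactly the paper's argument: two pendent neighbours in $T_1$ (resp.\ two interval neighbours in $T_2$) would have identical traces on $S$, violating the locating property, and any vertex of $I(v)\setminus S$ is adjacent to both $v$ and $m(v)$, so it cannot lie in $T_1$.

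Part (3), however, is where the real content of the lemma lies, and there your text is a plan rather than a proof: after correctly reducing to ``at most one pendent neighbour in $T_1$, hence at least four in $T_2$,'' you assert that the forced second dominators ``over-constrain the configuration'' and propose an enumeration over which pendent neighbours lie in $T_1$ versus $T_2$, but no case of that enumeration is actually carried out, so the contradiction is never exhibited. Moreover, one of the contradiction types you list is not a contradiction at all: the hypothesis is $P(v)\cap (T_3\cup S)=\emptyset$, which places no constraint on $I(v)$, so a vertex of $I(v)$ landing outside $T_1\cup T_2$ is perfectly consistent. The missing idea is short and makes the enumeration unnecessary; it is what the paper does. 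In your coordinates $v=(0,0)$, $m(v)=(1,1)$, consider the two pendent neighbours $(-1,0)$ and $(0,-1)$. By part (1) at least one of them, say $u$, is in $T_2$, so $u$ has a second dominator $v'\in S$, $v'\neq v$. Either $v'\in P(v)$, contradicting $P(v)\cap S=\emptyset$ at once, or a direct check of the handful of remaining positions of $v'$ (namely $v'\in I(v)$ or $v'\notin N[v]$) shows that $N(v)\cap N(v')$ always contains at least two vertices of $P(v)$. By hypothesis these two vertices are in $T_1\cup T_2$, and since each is adjacent to both $v$ and $v'$ they are forced into $T_2$ with the same trace $\{v,v'\}$ on $S$, contradicting the locating property. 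Without this step (or a fully executed version of your case analysis), part (3) of your proposal has a genuine gap.
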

\begin{proof}
(1) Suppose $v$ is the only neighbor of two vertices in $P(v)\backslash S$, then it contradicts the locating property for LPDS.

(2) Suppose $v$ and $m(v)$ are the only neighbors of two vertices in $I(v)\backslash S$, then it contradicts the locating property for LPDS. Also the vertex in $I(v)$ has at least two neighbors, $v$ and $m(v)$, in $S$.

(3) We assume $P(v)\cap (T_3\cup S)=\emptyset$ and $v\in S_1$. From the symmetry, it is safe to assume $v=(0,0)$ and $m(v)=(-1,-1)$. From (1), we know either $(1,0)$ or $(0,1)$ is not in $T_1$. From the symmetry, it is safe to assume $u=(0,1)$ is not in $T_1$. Then $u$ has another neighbor in $S$, say $v^{\prime}$. It is not hard to find that $N(v)\cap N(v^{\prime})$ has at least two members. These two members have to be in $T_2$, which contradicts the locating property for LPDS.
\end{proof}
Remark that Lemma \ref{lem1} have been covered by \cite{lpdsking}.
\begin{proof}[Proof of Theorem \ref{thm1}]
The initial charge $\ch_0$ on $G$ is given by 
\begin{align*}
    &\forall v\in V\backslash S, \ch_0(v)=0,\\
    &\forall v\in S_1, \ch_0(v)=14/3,\\
    &\forall v\in S_2, \ch_0(v)=9/2.
\end{align*}
Then we apply the following local discharge rules.
\begin{align*}
   \forall i\in\{1,2,3\},\forall v\in S, \forall u\in T_i\cap N(v), f(v,u)=1/i,
\end{align*}
where $f$ is a function from $V\times V$ to $[ 0,+\infty)$ and the function value $f(v,u)$ means the charge that sends from $v$ to $u$. Remark that $f(v,u)=0$ if it is not mentioned above.

The new charge is given by 
\begin{equation*}
    \ch_1(v)=\ch_0(v)+\sum_{u\in N(v)}f(u,v)-\sum_{u\in N(v)}f(v,u),\forall v\in V.
\end{equation*}

We claim that the new charge is at least one everywhere in the king grid.
\begin{claim}
\label{cl1}
Under the discharge rules $f$, for each $v\in V$, we have $\ch_1(v)\ge 1$.
\end{claim}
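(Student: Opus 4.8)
The goal is to verify that the redistributed charge $\ch_1(v)\ge 1$ for every vertex $v$ of the king grid, by a case analysis on whether $v\in V\setminus S$ or $v\in S$. The total charge is conserved under the discharge rules, so once the claim is proved we integrate over a large ball $N_G^k[u]$ and let $k\to\infty$; the left-hand side converges to $\frac{14}{3}D(S_1)+\frac{9}{2}D(S_2)$ (using the degree bound $M$ to control boundary effects, as guaranteed by the standing assumption), and the right-hand side is $\ge 1$ by the claim, which yields Theorem~\ref{thm1}. So the real work is Claim~\ref{cl1} itself.

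\emph{Vertices outside $S$.} Let $v\in V\setminus S$. Then $\ch_0(v)=0$ and $v$ receives but never sends charge, so $\ch_1(v)=\sum_{u\in N(v)\cap S}f(u,v)$. If $v\in T_i$ with $i\in\{1,2\}$ then $v$ has exactly $i$ neighbors in $S$, each sending $1/i$, so $\ch_1(v)=i\cdot(1/i)=1$. If $v\in T_3$ then $v$ has at least $3$ neighbors in $S$; each sends $1/3$, so $\ch_1(v)\ge 3\cdot(1/3)=1$. This case is immediate.

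\emph{Vertices in $S$.} Let $v\in S$. It sends no charge it receives back, and sends $f(v,u)=1/i$ to each $u\in N(v)\cap T_i$, while it may receive charge only from $m(v)$ — but $m(v)\in S$ sends nothing, so in fact $\ch_1(v)=\ch_0(v)-\sum_{u\in N(v)\setminus S}f(v,u)$. For $v\in S_1$ the relevant neighborhood is $P(v)$ (with $|P(v)|=5$) together with $I(v)$ (with $|I(v)|=2$); for $v\in S_2$ it is $P(v)$ ($|P(v)|=3$) together with $I(v)$ ($|I(v)|=4$). The largest possible outflow occurs when as many of these neighbors as possible lie in $T_1$ (sending $1$ each) rather than $T_2$ (sending $1/2$) or $T_3\cup S$ (sending $\le 1/3$ or $0$). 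This is exactly where Lemma~\ref{lem1} does the work: part~(1) gives $|P(v)\cap T_1|\le 1$, part~(2) gives $I(v)\cap T_1=\emptyset$ and $|I(v)\cap T_2|\le 1$, and part~(3) gives, for $v\in S_1$, at least one vertex of $P(v)$ in $T_3\cup S$ (outflow $\le 1/3$ there).

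\emph{Finishing the $S$-case and the main obstacle.} For $v\in S_1$: among the $5$ pendant neighbors, at most one is in $T_1$ (outflow $1$), at least one is in $T_3\cup S$ (outflow $\le 1/3$), and the remaining $\le 3$ are in $T_2$ at worst (outflow $1/2$ each); among the $2$ interval neighbors, at most one is in $T_2$ (outflow $1/2$) and the other is in $T_3\cup S$ or $T_2$ — but by Lemma~\ref{lem1}(2) neither is in $T_1$, and the worst total from $I(v)$ is $1/2+1/2=1$ (one in $T_2$, one in $T_2\cup(T_3\cup S)$; in fact both could be in $T_2$ only if that does not violate (2), so the bound $|I(v)\cap T_2|\le1$ caps this at $1/2+1/3$ — one must recheck carefully). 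Taking the worst case gives outflow $\le 1 + 3\cdot\frac12 + \frac13 + \frac12 + \frac13 = \frac{14}{3} - 1$... precisely, the arithmetic must land at exactly $\ch_0(v)-1 = \frac{14}{3}-1 = \frac{11}{3}$ as an upper bound for the outflow, so $\ch_1(v)\ge 1$. For $v\in S_2$: $3$ pendant neighbors (at most one in $T_1$, the rest $\le 1/2$) and $4$ interval neighbors (none in $T_1$, at most one in $T_2$, the rest $\le 1/3$), giving outflow $\le 1 + 2\cdot\frac12 + \frac12 + 3\cdot\frac13 = \frac72 = \ch_0(v)-1$, hence $\ch_1(v)\ge 1$. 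The main obstacle is not any single inequality but making sure the bookkeeping in the $S_1$ and $S_2$ cases is tight and exhaustive: one must confirm that Lemma~\ref{lem1} really does cap every combination, that $I(v)$ and $P(v)$ are handled without double counting, and that the numbers $14/3$ and $9/2$ were chosen exactly so the worst case is an equality; the geometry behind Lemma~\ref{lem1}(3) (that a far pair cannot have all five pendant neighbors in $T_1\cup T_2$) is the crux that forces the slightly larger constant $14/3$ for far pairs.
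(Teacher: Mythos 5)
Your proof is correct and follows essentially the same route as the paper: the $T_i$ cases are immediate from the rule $f(v,u)=1/i$, and the $S_1$, $S_2$ cases use exactly Lemma~\ref{lem1}(1)--(3) to bound the worst-case outflow by $11/3$ and $7/2$ respectively. The momentary hesitation about the two interval neighbors of a far pair is resolved correctly (Lemma~\ref{lem1}(2) caps that contribution at $\tfrac12+\tfrac13$), matching the paper's computation.
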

\begin{proof}[Proof of Claim \ref{cl1}]
We have $V=T_1\cup T_2\cup T_3\cup S_1 \cup S_2$. For each $i\in\{1,2,3\}$, if $v\in T_i$, then $v$ get $1/i$ for each of its at least $i$ neighbors in $S$, which makes 
\begin{equation*}
    \ch_1(v)\ge 0+i\cdot 1/i=1.
\end{equation*}
If $v\in S_1$, then from Lemma \ref{lem1}(1),(2),(3), we have
\begin{align*}
    \ch_1(v)&=\frac{14}{3}-\sum_{u\in P(v)}f(v,u)-\sum_{u\in I(v)} f(v,u)\\
    &\ge \frac{14}{3}-(1+\frac{1}{3}+3\cdot\frac{1}{2})-(\frac{1}{2}+\frac{1}{3})\\
    &=1.
\end{align*}
If $v\in S_2$, then from Lemma \ref{lem1}(1),(2), we have
\begin{align*}
    \ch_1(v)&=\frac{9}{2}-\sum_{u\in P(v)}f(v,u)-\sum_{u\in I(v)} f(v,u)\\
    &\ge \frac{9}{2}-(1+2\cdot\frac{1}{2})-(\frac{1}{2}+3\cdot\frac{1}{3})\\
    &=1.
\end{align*}
\end{proof}
Under the definition of density on an infinite graph, it is not hard to find that the average charge for $\ch_0$ is the left side of \eqref{eqthm1} and the average charge for $\ch_1$ is at least 1. The average charge is fixed under the discharge rule, so Theorem \ref{thm1} holds.
\end{proof}
\begin{proof}[Proof of Theorem \ref{thm2}]
The new initial charge $\ch_2$ on $G$ is given by
\begin{align*}
    &\forall v\in V\backslash S, \ch_2(v)=0,\\
    &\forall v\in S_1, \ch_2(v)=9/2,\\
    &\forall v\in S_2, \ch_2(v)=5.
\end{align*}
Similarly, it suffices to give several local discharge rules to make the charge at least 1 everywhere. We will manage it in three steps.
\begin{equation}
    \ch_2\xrightarrow{g_1}\ch_3\xrightarrow{g_2}\ch_4\xrightarrow{g_3}\ch_5.
\end{equation}

For convenience, we would like to add more notations before the discharge. We collect all the interval neighbors as $I$ and define the redundant pendent neighbors of $v$ denoted by $P_0(v)$.
\begin{equation*}
    I:=\cup_{v\in S} I(v),\qquad P_0(v):=P(v)\cap (S\cup I).
\end{equation*}
The remaining pendent neighbors of $v$ belong to one of the following sets.
\begin{equation*}
    \begin{aligned}
        P_i(v):=(P(v)\backslash P_0(v))\cap T_i,\quad i\in\{1,2,3\}.
    \end{aligned}
\end{equation*}
The size of these sets is denoted by 
\begin{equation*}
    p_i(v):=|P_i(v)| \quad \forall i\in\{0,1,2,3\}.
\end{equation*}
The members in $I(v)$ which is also in $S$ form the set $I_0(v)$ and its size is denoted by $i_0(v)$.
\begin{equation*}
    I_0(v):=I(v)\cap S,\quad i_0(v)=|I_0(v)|.
\end{equation*}
Here comes the first-step discharge rule $g_1(v,u)$.
\begin{align*}
    &\forall v\in S, \forall u\in I(v)\backslash S, g_1(v,u)=1/2,\\
    &\forall i\in\{1,2\},\forall v\in S, \forall u\in P_i(v), g_1(v,u)=1/i.
\end{align*}
Under the discharge $g_1$, we make sure that the vertices in $S\cup I\cup T_1\cup T_2$ have enough charge.
\begin{claim}
\label{cl2}
Under the discharge rules $g_1$, for each $v\in S\cup I\cup T_1\cup T_2$,  $\ch_3(v)\ge 1$.
\end{claim}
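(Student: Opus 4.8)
The plan is to verify $\ch_3(v)\ge 1$ type by type over the vertices of $S\cup I\cup T_1\cup T_2$, using Lemma~\ref{lem1} to bound the charge that leaves each dominating vertex and the adjacency structure of the grid to bound the charge that arrives at each non-dominating vertex. Since only vertices of $S$ act as senders under $g_1$, for $v\notin S$ it will suffice to collect one unit of incoming charge, and for $v\in S$ it will suffice to show the total outgoing charge is at most $\ch_2(v)-1$.

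First I would treat the non-dominating vertices. Note that $I\cap T_1=\emptyset$ by Lemma~\ref{lem1}(2), so the non-dominating vertices of $S\cup I\cup T_1\cup T_2$ are precisely those of $T_1$, $T_2\setminus I$, and $I\setminus S$. If $v\in T_1$ with unique $S$-neighbour $w$, then $v\in P(w)$ (it is neither $m(w)$ nor a vertex of $I(w)\subseteq I$), and since $v\notin S\cup I$ we get $v\in P_1(w)$, so $v$ receives $g_1(w,v)=1$. If $v\in I\setminus S$, I would pick a matched pair $\{w,m(w)\}$ with $v\in I(w)=I(m(w))$; then both $w$ and $m(w)$ send $1/2$ to $v$, totalling $1$. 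If $v\in T_2\setminus I$, its two $S$-neighbours $w_1,w_2$ cannot be matched to each other (else $v\in I(w_1)\subseteq I$), and the same reasoning as in the $T_1$ case gives $v\in P_2(w_1)\cap P_2(w_2)$, so $v$ again receives $\tfrac12+\tfrac12=1$.

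Then I would handle the dominating vertices $v\in S$, which receive nothing under $g_1$ and send out a total of
\[
\tfrac12\bigl(|I(v)|-i_0(v)\bigr)+p_1(v)+\tfrac12 p_2(v).
\]
By Lemma~\ref{lem1}(1), $p_1(v)\le 1$ (since $P_1(v)\subseteq P(v)\cap T_1$), while $p_0(v)+p_1(v)+p_2(v)+p_3(v)=|P(v)|$. For $v\in S_1$ (so $|P(v)|=5$ and $|I(v)|=2$), Lemma~\ref{lem1}(3) gives $p_0(v)+p_3(v)\ge 1$, hence $p_1(v)+p_2(v)\le 4$ and $2p_1(v)+p_2(v)\le 5$; the outgoing charge is then at most $1+\tfrac52-\tfrac12 i_0(v)\le\tfrac72$, so $\ch_3(v)\ge\tfrac92-\tfrac72=1$. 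For $v\in S_2$ (so $|P(v)|=3$ and $|I(v)|=4$) we simply have $p_1(v)+p_2(v)\le 3$, hence $2p_1(v)+p_2(v)\le 4$; the outgoing charge is at most $2+2-\tfrac12 i_0(v)\le 4$, so $\ch_3(v)\ge 5-4=1$.

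The arithmetic for the dominating vertices is routine; the only place that needs care is matching each combinatorial subcase of a non-dominating vertex to the correct clause of $g_1$ — in particular verifying that a vertex of $T_1$, or of $T_2\setminus I$, never lies in $P_0$ of either of its senders. I do not expect a genuine obstacle in this claim: $g_1$ is designed to leave the vertices of $T_3$ (apart from the $1/2$ they may pick up from a matched pair) still short of $1$, so the real weight of Theorem~\ref{thm2} falls on the later rounds $g_2$ and $g_3$, which must move the surplus remaining on the dominating vertices over to $T_3$.
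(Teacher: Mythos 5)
Your proof is correct and follows essentially the same route as the paper: incoming charge of at least $1$ for vertices in $I\setminus S$, $T_1$, and $T_2\setminus I$, and the same Lemma~\ref{lem1}-based bounds $\tfrac72$ (for $S_1$) and $4$ (for $S_2$) on the outgoing charge. Your bookkeeping with $p_i(v)$ and $i_0(v)$ just makes explicit the counting the paper does inline, so nothing further is needed.
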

\begin{proof}[Proof of Claim \ref{cl2}]
For each $u\in I\backslash S$, there exists at least two different $v_1,v_2\in S$ such that $u\in I(v_1)$ and $u\in I(v_2)$, then
\begin{equation*}
    ch_3(u)\ge 0+2\cdot1/2=1.
\end{equation*} 
For each $u\in T_1\backslash I$, there exists a unique $v\in S$ such that $u\in  P_1(v)$, then $\ch_3(u)\ge 1$. For each $u\in T_2\backslash I$, there exist two different $v_1,v_2\in S$ such that $u\in P_2(v_1)$ and $u\in P_2(v_2)$, then $\ch_3(u)\ge 1$. For each $v\in S_1$, from Lemma \ref{lem1}(1),(2),(3), we have 
\begin{equation*}
\begin{aligned}
    \ch_3(v)&=\frac{9}{2}-\sum_{u\in P(v)}g_1(v,u)-\sum_{u\in I(v)} g_1(v,u)\\
    &\ge \frac{9}{2}-(1+3\cdot\frac{1}{2})-(2\cdot\frac{1}{2})\\
    &=1.
\end{aligned}
\end{equation*}
For each $v\in S_2$, from Lemma \ref{lem1}(1),(2), we have 
\begin{equation*}
\begin{aligned}
    \ch_3(v)&=5-\sum_{u\in P(v)}g_1(v,u)-\sum_{u\in I(v)} g_1(v,u)\\
    &\ge 5-(1+2\cdot\frac{1}{2})-(4\cdot\frac{1}{2})\\
    &=1.
\end{aligned}
\end{equation*}
\end{proof}
The remaining task for discharge $g_2$ and $g_3$ is to sends proper charge to $T_3\backslash I=\cup_{v\in S} P_3(v)$. We need to make sure the charge is at least one everywhere on the infinite kind grid.

A natural idea is to send the redundant charge of $S$ to its neighbors $P_3(v)$ uniformly, but we would like to add an upper bound for it manually. For each $v\in S$, let
\begin{equation*}
    r(v):=\min \{\frac{\ch_3(v)-1}{p_3(v)},\frac{1}{2}\}.
\end{equation*}
Note that $r(v)\ge 0$ since $\ch_3(v)\ge 1$ when $v\in S$. Next, the second-step discharge rule $g_2(v,u)$ is given by
\begin{equation}
    \forall v\in S ,\forall u\in P_3(v), g_2(v,u)=r(v).
\end{equation}
From the discharge rule, its trivial to see that $\ch_4(v)\ge 1$ for each $v\in S$. Also, we have the following observations.
\begin{claim}
\label{lemr1}
If one of the following conditions holds, then $r(v)=1/2$.

(1) $v\in S_2$, (2) $p_0(v)+i_0(v)\ge 1$, (3) $p_1(v)=0$.

\end{claim}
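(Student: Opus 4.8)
The plan is to observe first that, since $r(v)=\min\{\frac{\ch_3(v)-1}{p_3(v)},\frac12\}$, proving $r(v)=\tfrac12$ amounts to showing
\[
\ch_3(v)\ge 1+\tfrac12 p_3(v).
\]
When $p_3(v)\ge 1$ this is literally equivalent to $\frac{\ch_3(v)-1}{p_3(v)}\ge\frac12$, and when $p_3(v)=0$ there is nothing to do, since then $r(v)$ is not used to send any charge and $\ch_3(v)\ge 1$ already holds by Claim \ref{cl2}. So the whole claim reduces to a closed-form estimate of $\ch_3(v)$ for $v\in S$, handled case by case.

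Next I would write $\ch_3(v)$ explicitly. Under $g_1$ the vertex $v$ receives nothing, because every recipient of $g_1$ lies outside $S$; and $v$ sends out $1$ to each of its $p_1(v)$ neighbours in $P_1(v)$, $\tfrac12$ to each of its $p_2(v)$ neighbours in $P_2(v)$, and $\tfrac12$ to each of the $|I(v)|-i_0(v)$ vertices of $I(v)\setminus S$, and nothing else. Substituting $|I(v)|=2,\ \ch_2(v)=9/2$ for $v\in S_1$ and $|I(v)|=4,\ \ch_2(v)=5$ for $v\in S_2$ gives
\[
\ch_3(v)=\tfrac72-p_1(v)-\tfrac12 p_2(v)+\tfrac12 i_0(v)\quad\text{for }v\in S_1,\qquad
\ch_3(v)=3-p_1(v)-\tfrac12 p_2(v)+\tfrac12 i_0(v)\quad\text{for }v\in S_2.
\]
The only further inputs needed are $p_1(v)\le 1$, from Lemma \ref{lem1}(1), and the counting identity $p_0(v)+p_1(v)+p_2(v)+p_3(v)=|P(v)|$, which equals $5$ on $S_1$ and $3$ on $S_2$.

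Then I treat the three cases. For $v\in S_2$ (case (1)), substituting and clearing denominators turns the target into $2p_1(v)+p_2(v)+p_3(v)\le 4+i_0(v)$; using $p_2(v)+p_3(v)=3-p_0(v)-p_1(v)$ this becomes $p_1(v)\le 1+p_0(v)+i_0(v)$, which holds unconditionally since $p_1(v)\le 1$. Thus (1) needs no hypothesis, and it also disposes of (2) and (3) whenever $v\in S_2$. So for $v\in S_1$ I only have to check (2) and (3): under (2) the target reduces to $p_1(v)\le p_0(v)+i_0(v)$, true because $p_1(v)\le 1\le p_0(v)+i_0(v)$ by hypothesis; under (3), with $p_1(v)=0$, it reduces to $p_2(v)+p_3(v)\le 5+i_0(v)$, immediate from $p_2(v)+p_3(v)\le 5$.

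There is no genuine obstacle here; the argument is bookkeeping. The points requiring care are making sure that under $g_1$ the vertices of $I(v)\cap S$ and of $P_0(v)\cup P_3(v)$ receive nothing from $v$, and using the correct pair-type data ($|I(v)|=2$ versus $4$, $\ch_2(v)=9/2$ versus $5$) so that the $\tfrac12 i_0(v)$ term and the constant come out right. It is exactly this $i_0(v)$ contribution, together with $p_0(v)$, that makes hypothesis (2)---rather than anything weaker---the natural condition in the $S_1$ case.
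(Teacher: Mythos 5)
Your proposal is correct and follows essentially the same route as the paper: both reduce the claim to the bound $\ch_3(v)\ge 1+\tfrac12 p_3(v)$ and verify it case by case using $p_1(v)\le 1$ (Lemma \ref{lem1}(1)) together with the counts $|P(v)|$, $|I(v)|$ for the two pair types; your version merely writes $\ch_3(v)$ as an exact identity before substituting, where the paper uses the corresponding inequalities directly.
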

\begin{proof}[Proof of Claim \ref{lemr1}]
If $v\in S_2$, from Lemma \ref{lem1}(1) and $p_1(v)+p_2(v)+p_3(v)\le |P(v)|=3$, we have
\begin{align*}
\ch_3(v)&\ge 5-\frac{1}{2}|I(v)\backslash S|-1\cdot p_1(v)-\frac{1}{2}p_2(v)\\
&\ge 5-\frac{1}{2}\cdot 4-\frac{1}{2}-\frac{1}{2}(3-p_3(v))\\
&=1+\frac{1}{2}p_3(v),
\end{align*}
which implies $r(v)= 1/2$.

If $v\in S_1$ and $p_0(v)+i_0(v)\ge 1$, from Lemma \ref{lem1}(1) and 
\begin{equation*}
p_1(v)+p_2(v)+p_3(v)\le |P(v)|-p_0(v)\le 5-p_0(v),    
\end{equation*}
we have
\begin{align*}
    \ch_3(v)&\ge \frac{9}{2}-\frac{1}{2}|I(v)\backslash S|- 1\cdot p_1(v)-\frac{1}{2} p_2(v)\\
    &\ge \frac{9}{2}-\frac{1}{2}(2-i_0(v))-\frac{1}{2}-\frac{1}{2}(5-p_0(v)-p_3(v))\\
    &=1+\frac{1}{2}p_3(v),
\end{align*}
which implies $r(v)= 1/2$.

Similarly, if $v\in S_1$ and $p_1(v)=0$, then
\begin{align*}
    \ch_3(v)&\ge \frac{9}{2}-\frac{1}{2}|I(v)\backslash S|- \frac{1}{2}p_2(v)\\
    &\ge \frac{9}{2}-\frac{1}{2}\cdot 2-\frac{1}{2}(5-p_3(v))\\
    &=1+\frac{1}{2}p_3(v),
\end{align*}
which implies $r(v)= 1/2$.
\end{proof}
\begin{claim}
\label{lemr2}
For each $v \in S$, we have 
\begin{align*}
    r(v)\ge \frac{p_3(v)-1}{2p_3(v)}
\end{align*}
\end{claim}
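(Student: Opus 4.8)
The plan is to establish the inequality $r(v)\ge \frac{p_3(v)-1}{2p_3(v)}$ by splitting into the two cases that define $r(v)$ as a minimum. Recall $r(v)=\min\left\{\frac{\ch_3(v)-1}{p_3(v)},\frac{1}{2}\right\}$. If $p_3(v)=0$ there is nothing to prove (the right-hand side is $-\infty$ or interpreted as an empty bound, and in any case $r(v)\ge 0$), so I would assume $p_3(v)\ge 1$. Then $\frac{p_3(v)-1}{2p_3(v)}<\frac12$, so the bound $r(v)\ge \frac{p_3(v)-1}{2p_3(v)}$ holds automatically whenever the minimum is achieved by $\frac12$; the only substantive case is when $r(v)=\frac{\ch_3(v)-1}{p_3(v)}$, where the claim reduces to showing $\ch_3(v)\ge 1+\frac{p_3(v)-1}{2}$, i.e. $\ch_3(v)\ge \frac{p_3(v)+1}{2}$.

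Next I would prove this charge lower bound by a direct computation of $\ch_3(v)$ from the definition of $g_1$, handling $v\in S_1$ and $v\in S_2$ separately, exactly as in the proof of Claim \ref{lemr1}. For $v\in S_2$, the computation there already yields $\ch_3(v)\ge 1+\frac12 p_3(v)$, which is stronger than what we need. For $v\in S_1$, I would again bound $\ch_3(v)\ge \frac92 - \frac12|I(v)\backslash S| - p_1(v) - \frac12 p_2(v)$ using Lemma \ref{lem1}(1) (so $p_1(v)\le 1$) together with $p_1(v)+p_2(v)+p_3(v)\le |P(v)| - p_0(v) = 5-p_0(v)$ and $|I(v)\backslash S| = 2 - i_0(v)$. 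Substituting, $\ch_3(v)\ge \frac92 - \frac12(2-i_0(v)) - 1 - \frac12(4 - p_0(v) - p_3(v)) = \frac12 + \frac12 i_0(v) + \frac12 p_0(v) + \frac12 p_3(v) \ge \frac12 + \frac12 p_3(v) \ge \frac{p_3(v)+1}{2}$, which is what is required. (Here I used $p_1(v)\le 1$ in the cruder form that drops one unit of charge; one should double-check that replacing $p_1(v)$ by $1$ in the bound is valid, since if $p_1(v)=0$ then Claim \ref{lemr1}(3) already gives $r(v)=1/2$ and we are done, while if $p_1(v)=1$ the substitution is exact.)

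The only real subtlety — and the step I expect to need the most care — is the bookkeeping that every pendent neighbor of $v$ lies in exactly one of $P_0(v), P_1(v), P_2(v), P_3(v)$, so that $p_0+p_1+p_2+p_3 = |P(v)|$, and that $g_1$ sends charge $1/i$ only to vertices in $P_i(v)$ for $i\in\{1,2\}$, nothing to $P_0(v)$ or $P_3(v)$, and $1/2$ to each of the $|I(v)\backslash S| = 2-i_0(v)$ vertices of $I(v)\backslash S$. Once this accounting is pinned down, the inequality $\ch_3(v)\ge \frac{p_3(v)+1}{2}$ in both cases is immediate, and combining with the trivial case where $r(v)=\frac12$ completes the proof. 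I would write it as: if $\frac{\ch_3(v)-1}{p_3(v)}\ge \frac12$ then $r(v)=\frac12\ge \frac{p_3(v)-1}{2p_3(v)}$; otherwise $r(v)=\frac{\ch_3(v)-1}{p_3(v)}$ and the computation above gives $\ch_3(v)-1\ge \frac{p_3(v)-1}{2}$, hence $r(v)\ge \frac{p_3(v)-1}{2p_3(v)}$.
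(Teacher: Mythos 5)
Your proposal is correct and follows essentially the same route as the paper: dispose of the cases where $r(v)=\tfrac12$ via Claim \ref{lemr1} (and the trivial bound $\tfrac{p_3(v)-1}{2p_3(v)}<\tfrac12$), then for the remaining situation ($v\in S_1$, $p_1(v)=1$) run the same accounting of the $g_1$-charge to get $\ch_3(v)\ge \tfrac12+\tfrac12 p_3(v)$, which yields the claim. The only cosmetic difference is that the paper first sets $p_0(v)=i_0(v)=0$ using Claim \ref{lemr1}(2) before computing (and invokes Lemma \ref{lem1}(3) to note $p_3(v)\ge 1$), whereas you carry the nonnegative terms $\tfrac12 i_0(v)+\tfrac12 p_0(v)$ through and drop them at the end.
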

\begin{proof}[Proof of Claim \ref{lemr2}]
According to Claim \ref{lemr1}, in those cases $r(v)\ge \frac{1}{2}\ge \frac{p_3(v)-1}{2p_3(v)}$. It suffices to check the case that satisfies $v\in S_1$, $p_0(v)=0$ and $p_1(v)=1$. From these three conditions and Lemma \ref{lem1}(3), we know that $5=p_1(v)+p_2(v)+p_3(v)$, $p_3(v)\ge 1$. Then, we have
\begin{align*}
    \ch_3(v)&=\frac{9}{2}-\frac{1}{2}|I(v)\backslash S|-1\cdot p_1(v)-\frac{1}{2}p_2(v)\\
    &\ge \frac{9}{2}-\frac{1}{2}\cdot 2-1-\frac{1}{2}(4-p_3(v))\\
    &\ge \frac{1}{2}+\frac{1}{2} p_3(v),
\end{align*}
which implies the claim.
\end{proof}
Note that
\begin{equation*}
    \frac{p_3(v)-1}{2p_3(v)}=\frac{1}{2}-\frac{1}{2p_3(v)}
\end{equation*}
is increasing as a function of $p_3(v)$, so we can say that $r(v)$ is at least $\frac{1}{4}$ or $\frac{1}{3}$ when $p_3(v)$ is at least $2$ or $3$, respectively.
\begin{claim}
\label{lem2}
Suppose $v_1=(a+1,b),v_2=(a-1,b)\in S$, then 
\begin{equation*}
    r(v_1)+r(v_2)\ge \frac{1}{2}.
\end{equation*}
\end{claim}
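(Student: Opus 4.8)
The plan is to remove the easy cases with Claim~\ref{lemr1}, then pin down the local geometry near $v_1,v_2$ and use the locating property to force $p_3(v_1),p_3(v_2)\ge 2$, after which Claim~\ref{lemr2} finishes. First I would dispatch the easy cases: if $v_1\in S_2$, or $p_0(v_1)+i_0(v_1)\ge 1$, or $p_1(v_1)=0$, then Claim~\ref{lemr1} gives $r(v_1)=1/2$, and since $r(v_2)\ge 0$ we are done; symmetrically for $v_2$. So it remains to treat the case in which both $v_1,v_2\in S_1$ with $p_0(v_j)=i_0(v_j)=0$ (hence $p_1(v_j)=1$) for $j=1,2$.

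Next I would normalize coordinates so that $v_2=(0,0)$ and $v_1=(2,0)$, making the common neighbours of $v_1,v_2$ precisely $(1,0),(1,1),(1,-1)$. For any $v\in S$ one has the partition $N(v)=P(v)\sqcup I(v)\sqcup\{m(v)\}$, which when $p_0(v)=i_0(v)=0$ collapses to $N(v)\cap S=\{m(v)\}$. Applying this to $v_2$: since $m(v_2)$ is a $\sqrt2$-neighbour of the origin, $m(v_2)\in\{(\pm1,\pm1)\}$; and if $m(v_2)=(1,1)$ or $(1,-1)$, then that vertex is a neighbour of $v_1$ lying in $S$ but distinct from $m(v_1)$ (it cannot equal $m(v_1)$, or else $v_1$ and $v_2$ would share a partner in $M$), which forces $p_0(v_1)+i_0(v_1)\ge 1$, contradicting the reduced case. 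Hence $m(v_2)\in\{(-1,1),(-1,-1)\}$, and after reflecting in the $x$-axis we may assume $m(v_2)=(-1,-1)$. A direct computation then gives $P(v_2)=\{(1,0),(0,1),(1,1),(1,-1),(-1,1)\}$ and $I(v_2)=\{(-1,0),(0,-1)\}$, and $p_0(v_2)=0$ shows none of these pendant or interval vertices of $v_2$ lies in $S$.

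The heart of the argument is to show $(1,1),(1,-1)\in T_3$. Using $N(v_1)\cap S=\{m(v_1)\}$ and $d(v_1,m(v_1))=\sqrt2$, the neighbours $(2,1)$ and $(2,-1)$ of $v_1$ are not in $S$; together with the previous paragraph this means every neighbour of $(1,0)$ other than $v_1$ and $v_2$ lies outside $S$, so $(1,0)\in T_2$ with $N((1,0))\cap S=\{v_1,v_2\}$. Now $(1,1)$ is adjacent to both $v_1$ and $v_2$, so it is not in $T_1$; it is not in $S$; and if it were in $T_2$ its $S$-neighbourhood would also be exactly $\{v_1,v_2\}=N((1,0))\cap S$, violating the locating property against $(1,0)$. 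Hence $(1,1)\in T_3$, and likewise $(1,-1)\in T_3$. Finally, $(1,1)$ and $(1,-1)$ lie in $P(v_2)$ by the previous paragraph, and they also lie in $P(v_1)$: each is a neighbour of $v_1$, distinct from $m(v_1)$ — which, since $(1,\pm1)\notin S$, must be $(3,1)$ or $(3,-1)$ — and not in $I(v_1)$, since $(1,\pm1)$ is not adjacent to $(3,\pm1)$. As $p_0(v_1)=p_0(v_2)=0$, these two vertices in fact belong to $P_3(v_1)$ and to $P_3(v_2)$, so $p_3(v_1),p_3(v_2)\ge 2$. By Claim~\ref{lemr2} and the monotonicity of $(p_3-1)/(2p_3)$ recorded after it, $r(v_1),r(v_2)\ge 1/4$, whence $r(v_1)+r(v_2)\ge 1/2$.

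The main obstacle is this core step: recognizing that a vertex $v_2$ in the worst configuration ($r(v_2)=0$, i.e.\ $p_3(v_2)=1$) cannot have a partner vertex $v_1$ two steps away along a row, because the locating constraint against $(1,0)\in T_2$ squeezes both diagonal common neighbours $(1,\pm1)$ into $T_3$. The accompanying bookkeeping — excluding $m(v_2)\in\{(1,1),(1,-1)\}$ and checking $(2,\pm1)\notin S$ — is what legitimizes the claim $(1,0)\in T_2$, and is the part most likely to hide an error.
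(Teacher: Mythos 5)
Your proof is correct and follows essentially the same route as the paper: reduce via Claim~\ref{lemr1} to the case where each $v_i$'s only $S$-neighbour is its far partner, use the locating property on the three common neighbours of $v_1,v_2$ to force two of them into $T_3\setminus I$, and conclude $p_3(v_i)\ge 2$, hence $r(v_i)\ge 1/4$, via Claim~\ref{lemr2}. The only cosmetic difference is that you pin down which common neighbour lies in $T_2$ (after fixing $m(v_2)$ up to reflection), whereas the paper argues by contradiction with a direct pigeonhole that at least two of the three must have a third $S$-neighbour.
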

\begin{proof}[Proof of Claim \ref{lem2}]
Suppose $r(v_1)+r(v_2)<\frac{1}{2}$, then from Claim \ref{lemr1}, we know that $v_1, v_2\in S_1$ and 
\begin{equation*}
N(v_i)\cap S=\{m(v_i)\}\quad \mbox{for}\quad i\in\{1,2\}.    
\end{equation*}
It indicates that $m(v_1)$ is either $(a+2,b+1)$ or $(a+2,b-1)$, so
\begin{equation*}
u_1=(a,b-1),u_2=(a,b),u_3=(a,b+1)    
\end{equation*}
are in $P(v_1)\backslash S$. From Claim \ref{lemr1}, we also know $u_1,u_2,u_3$ are not in $I$. Since $v_1$ and $v_2$ are in the neighborhood of $u_j$ for each $j\in\{1,2,3\}$, to locate $u_1,u_2$ and $u_3$, at least two of them has another neighbor in $S$. We conclude that at least two of $u_1,u_2,u_3$ are in $T_3\backslash I$. That means $p_3(v_1)\ge 2$ and $r(v_1)\ge \frac{1}{4}$. Similarly, we have $r(v_2)\ge \frac{1}{4}$. Here is a contradiction, since
\begin{equation*}
    r(v_1)+r(v_2)\ge \frac{1}{4}+\frac{1}{4}=\frac{1}{2}.
\end{equation*}
\end{proof}

Remark that from the symmetry of the king grid, this claim also works for the case when $v_1=(a,b+1), v_2=(a,b-1)$.

Now, we are ready to investigate the charge $\ch_4$ on $T_3\backslash I$. Suppose $u_0=(a,b)\in T_3\backslash I$, then it has at least 3 neighbors in $S$, and for each $v\in S\cap N(u_0)$, we have $u_0\in P_3(v)$. We separate it into three cases to check if $\ch_4(u_0)\ge 1$. Because of the symmetry of the king grid, each $u_0\in T_3\backslash I$ is symmetric to one of these three cases.

\begin{case1}
\label{case1}
$N(u_0)\cap S$ is not an independent set in the king grid $G$.
\end{case1}
Suppose $v_1,v_2$ are adjacent and $v_1,v_2\in S\cap N(u_0)$, then since $u_0\notin I$, $v_1,v_2$ are not partner of each other. $v_1$ is in either $I_0(v_2)$ or $P_0(v_2)$. From Claim \ref{lemr1}, we know that $r(v_2)=1/2$. Similarly, we have $r(v_1)=1/2$. Therefore, we can say
\begin{equation*}
    \ch_4(u_0)\ge g_2(v_1,u_0)+g_2(v_2,u_0)=r(v_2)+r(v_1)=1.
\end{equation*}

\begin{case2}
\label{case2}
$N(u_0)\cap S$ is an independent set in the king grid $G$ and $v_1=(a+1,b+1), v_2=(a+1,b-1), v_3=(a-1,b+1)\in N(u_0)\cap S$. See Figure \ref{c1}.
\end{case2}
Remark that in Case 2, the vertex $(a-1,b-1)$ can either be in $S$ or not.
\begin{figure}[h!]
\centering
\includegraphics[width=0.4\linewidth]{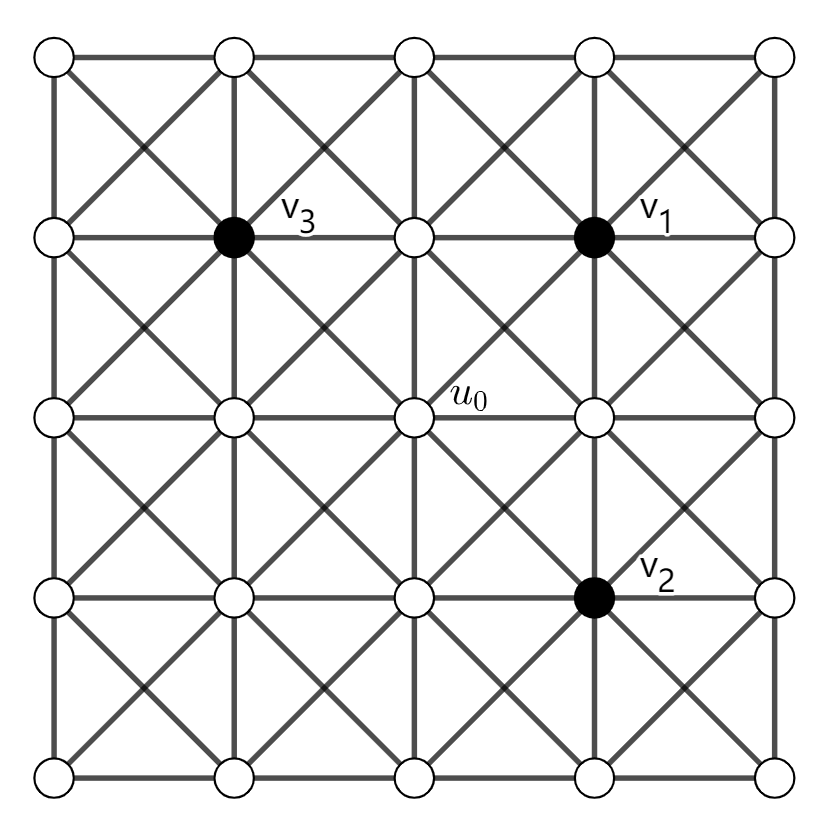}
\caption{Case 2 of the vertex $u_0$ in $T_3\backslash I$}
\label{c1}
\end{figure}
\begin{claim}
\label{clcase2}
In Case 2, we have $\ch_4(u_0)\ge 1$.
\end{claim}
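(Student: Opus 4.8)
The plan is to route the redundant charges $r(v)$ from the $S$-neighbours of $u_0$ and show that they already sum to at least $1$. Since $u_0\in T_3\backslash I$, it gains nothing under $g_1$, so $\ch_3(u_0)=0$; and under $g_2$ every $v\in N(u_0)\cap S$ sends $r(v)$ to $u_0$, because $u_0\in P_3(v)$. Hence $\ch_4(u_0)=\sum_{v\in N(u_0)\cap S}r(v)$. In Case 2 the independence of $N(u_0)\cap S$ forces the four edge-neighbours $(a\pm1,b),(a,b\pm1)$ of $u_0$ out of $S$, since each of them is adjacent to one of $v_1,v_2,v_3$; thus $N(u_0)\cap S$ equals $\{v_1,v_2,v_3\}$ or $\{v_1,v_2,v_3,v_4\}$ with $v_4=(a-1,b-1)$, and in either case $\ch_4(u_0)\ge r(v_1)+r(v_2)+r(v_3)$.

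If $v_4\in S$, then $\{v_1,v_2\}$ and $\{v_3,v_4\}$ are vertically aligned pairs of $S$-vertices at grid-distance $2$, so two applications of Claim \ref{lem2} give $\ch_4(u_0)\ge\frac12+\frac12=1$. So assume $v_4\notin S$, i.e., $N(u_0)\cap S=\{v_1,v_2,v_3\}$. Applying Claim \ref{lem2} to the vertical pair $\{v_1,v_2\}$ and the horizontal pair $\{v_1,v_3\}$ gives $r(v_1)+r(v_2)\ge\frac12$ and $r(v_1)+r(v_3)\ge\frac12$, so we are done if $r(v_2)=\frac12$ or $r(v_3)=\frac12$. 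Otherwise $r(v_2),r(v_3)<\frac12$, and Claim \ref{lemr1} forces $v_2,v_3\in S_1$ with $p_0(v_j)=i_0(v_j)=0$ and $p_1(v_j)=1$ (the value $1$ being forced by Claim \ref{lemr1}(3) and Lemma \ref{lem1}(1)); a short computation then gives $\ch_3(v_j)=\frac12+\frac12 p_3(v_j)$, so $r(v_j)=\frac{p_3(v_j)-1}{2p_3(v_j)}$, with $p_3(v_j)\ge1$ since $u_0\in P_3(v_j)$.

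Next I would rule out $r(v_1)<\frac12$, using the auxiliary fact that if $v,v'\in S_1$ lie at grid-distance $2$ along an axis and satisfy $p_0=i_0=0$, then no common neighbour of $v$ and $v'$ lies in $S$ — such a vertex would be forced to equal both $m(v)$ and $m(v')$. If $r(v_1)<\frac12$ then also $v_1\in S_1$ and $p_0(v_1)=i_0(v_1)=0$, so this applies to $\{v_1,v_2\}$ and $\{v_1,v_3\}$, giving $(a+2,b),(a,b+2)\notin S$; among the three $\sqrt2$-neighbours of $v_1$ other than $u_0$ this leaves $m(v_1)=(a+2,b+2)$, for which $P(v_1)=\{(a,b),(a,b+1),(a,b+2),(a+1,b),(a+2,b)\}$. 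But each of these five vertices has at least two neighbours in $\{v_1,v_2,v_3\}\subseteq S$, so none lies in $T_1$; thus $p_1(v_1)=0$ and $r(v_1)=\frac12$ by Claim \ref{lemr1}(3), a contradiction. Hence $r(v_1)=\frac12$, and it remains to prove $p_3(v_2)\ge2$ and $p_3(v_3)\ge2$, for then $r(v_2)+r(v_3)\ge\frac14+\frac14=\frac12$ and $\ch_4(u_0)\ge1$.

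The main obstacle is this last step. Assuming $p_3(v_2)=1$, the five pendants of $v_2$ consist of $u_0\in T_3$, three $T_2$-pendants (each with exactly two dominators), and one $T_1$-pendant. I would examine the three possible partner positions $m(v_2)\in\{(a,b-2),(a+2,b-2),(a+2,b)\}$, in each case tracking which nearby vertices are thereby excluded from $S$ (for instance $(a+1,b)\in T_2$ forces $(a+2,b),(a+2,b\pm1)\notin S$), and using the locating property of Lemma \ref{lem1} together with $v_4\notin S$ to show that no vertex is left to play the role of the $T_1$-pendant; the argument for $v_3$ is symmetric. Keeping this local bookkeeping consistent across the partner sub-cases is where the real work lies, since the two quick applications of Claim \ref{lem2} only give $\ch_4(u_0)\ge\frac12$ and cannot be improved without the geometric analysis around $u_0$.
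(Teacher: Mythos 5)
Your set-up matches the paper's proof: the reduction $\ch_4(u_0)=\sum_{v\in N(u_0)\cap S}r(v)$, the two applications of Claim \ref{lem2}, the use of Claim \ref{lemr1} to force $v_2,v_3\in S_1$ with $p_0=i_0=0$, $p_1=1$, the argument that $r(v_1)=\frac12$ (your contradiction route via $m(v_1)=(a+2,b+2)$ is essentially the paper's case split), and the final reduction to showing $p_3(v_2)\ge 2$ and $p_3(v_3)\ge 2$ so that Claim \ref{lemr2} gives $r(v_2)+r(v_3)\ge\frac12$. However, you stop exactly at that last step and only describe a plan (``I would examine the three possible partner positions\dots''). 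That step is not routine bookkeeping; it is the bulk of the paper's proof of this claim, and leaving it as a sketch is a genuine gap: without $p_3(v_2),p_3(v_3)\ge2$ the applications of Claim \ref{lem2} only yield $\ch_4(u_0)\ge\frac12$, as you yourself note.

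Moreover, the strategy you sketch --- show that no vertex can serve as the $T_1$-pendant of $v_2$ --- does not obviously close in all sub-cases. When $m(v_2)\in\{(a,b-2),(a+2,b-2)\}$, a $T_1$-pendant of $v_2$ (e.g.\ at $(a+2,b-1)$ or $(a+2,b-2)$) is not excluded by the local data; the actual contradiction with $p_3(v_2)=1$ comes from the three vertices $(a,b),(a+1,b),(a+2,b)$, which all lie in $P(v_2)$ and are all dominated by both $v_1$ and $v_2$: since at most one of them can have $S$-neighbourhood exactly $\{v_1,v_2\}$, at least two of them lie in $T_3$, forcing $p_3(v_2)\ge2$ directly. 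Only in the remaining sub-case $m(v_2)=(a+2,b)$ does the paper argue along the lines you propose, splitting on whether $(a,b-1)$ is the $T_1$-pendant and deriving locating contradictions among $(a,b-1),(a,b-2),(a+1,b-2),(a+2,b-2)$. So to complete your proof you would need both this corrected mechanism for the first sub-case and the detailed pendant analysis for the second; as written, the decisive inequality $p_3(v_2),p_3(v_3)\ge2$ is asserted but not proved.
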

\begin{proof}[Proof of Claim \ref{clcase2}]
Suppose $\ch_4(u_0)<1$, then
\begin{equation*}
    r(v_1)+r(v_2)+r(v_3)\le ch_4(u_0)<1.
\end{equation*}
From Claim \ref{lem2}, we have $r(v_1)+r(v_2)\ge \frac{1}{2}$ and $r(v_1)+r(v_3)\ge \frac{1}{2}$. Then, we know $r(v_2)<\frac{1}{2}$ and $r(v_3)<\frac{1}{2}$, which means they violate the conditions for Claim \ref{lemr1}. This makes sure
\begin{equation*}
    m(v_1)\notin\{(a,b+2),(a+2,b)\},
\end{equation*}
which means either $m(v_1)\in \{(a+1,b+2),(a+2,b+1)\}$ or $m(v_1)=(a+2,b+2)$. The former one gives $v_1\in S_2$ and the latter one gives $p_1(v_1)\le|T_1\cap P(v_1)|=0$. The conclusion is $r(v_1)=\frac{1}{2}$.

Let's move back to $v_2$. Since $r(v_2)<\frac{1}{2}$, we know $p_0(v_2)=0, p_1(v_2)=1$ and $v_2\in S_1$. Either $m(v_2)\in\{(a+2,b-2),(a,b-2)\}$ or $m(v_2)=(a+2,b)$. If $m(v_2)\in\{(a+2,b-2),(a,b-2)\}$, then $(a+2,b)$ is not in $S$. Consider
\begin{equation*}
  U=\{(a,b),(a+1,b),(a+2,b)\}.  
\end{equation*}
Since $U$ is in the neighborhood of both $v_1$ and $v_2$, to locate these three vertices in $U$, at least two of them are in $T_3$. Therefore, we have $p_3(v_2)\ge 2$ in this case. For the other case $m(v_2)=(a+2,b)$, we can say
\begin{equation*}
    \{(a,b),(a,b-1),(a,b-2),(a+1,b-2),(a+2,b-2)\}=P_1(v_2)\cup P_2(v_2)\cup P_3(v_2).
\end{equation*}
We claim that we also have $p_3(v_2)\ge 2$ in this case. If not, then $P_3(v_2)=\{(a,b)\}$ and
\begin{equation*}
    \{(a,b-1),(a,b-2),(a+1,b-2),(a+2,b-2)\}=P_1(v_2)\cup P_2(v_2).
\end{equation*}

When $P_1(v_2)=\{(a,b-1)\}$, $(a,b-2),(a+1,b-2)$ and $(a+2,b-2)$ are all in $P_2(v_2)$. That means there are three distinct vertices $v_4,v_5,v_6$ in $S$ such that 
\begin{align*}
    &\{v_4,v_2\}=N((a,b-2))\cap S,\\
    &\{v_5,v_2\}=N((a+1,b-2))\cap S,\\
    &\{v_6,v_2\}=N((a+2,b-2))\cap S.
\end{align*}
However, this is impossible since the first and the last one cover the middle one for those locations that can be in $S$.

When $P_1(v_2)\neq \{(a,b-1)\}$, we have $(a,b-1)\in P_2(v_2)$. The other neighbor of $(a,b-1)$ in $S$ is either $(a-1,b-1)$ or $(a-1,b-2)$. In either way, it violates $(a,b-2)\in P_2(v_2)\cup P_1(v_2)$, since
\begin{equation*}
     N((a,b-2))\cap S\neq N((a,b-1))\cap S.
\end{equation*}

The claim $p_3(v_2)\ge 2$ has been proved by contradiction. Then, in both cases, we have shown $p_3(v_2)\ge 2$, which means $r(v_2)\ge \frac{1}{4}$ by Claim \ref{lemr2}.

Similarly, we have the same argument that says $r(v_3)\ge \frac{1}{4}$. We conclude that
\begin{equation*}
    \ch_4(u_0)\ge r(v_1)+r(v_2)+r(v_3)\ge \frac{1}{2}+\frac{1}{4}+\frac{1}{4}=1.
\end{equation*}
\end{proof}

\begin{case3}
\label{case3}
$N(u_0)\cap S$ is an independent set in the king grid $G$ and $v_1=(a,b+1), v_2=(a+1,b-1), v_3=(a-1,b-1)\in N(u_0)\cap S$. See Figure \ref{c3}.
\end{case3}

\begin{figure}[h!]
\centering
\includegraphics[width=0.4\linewidth]{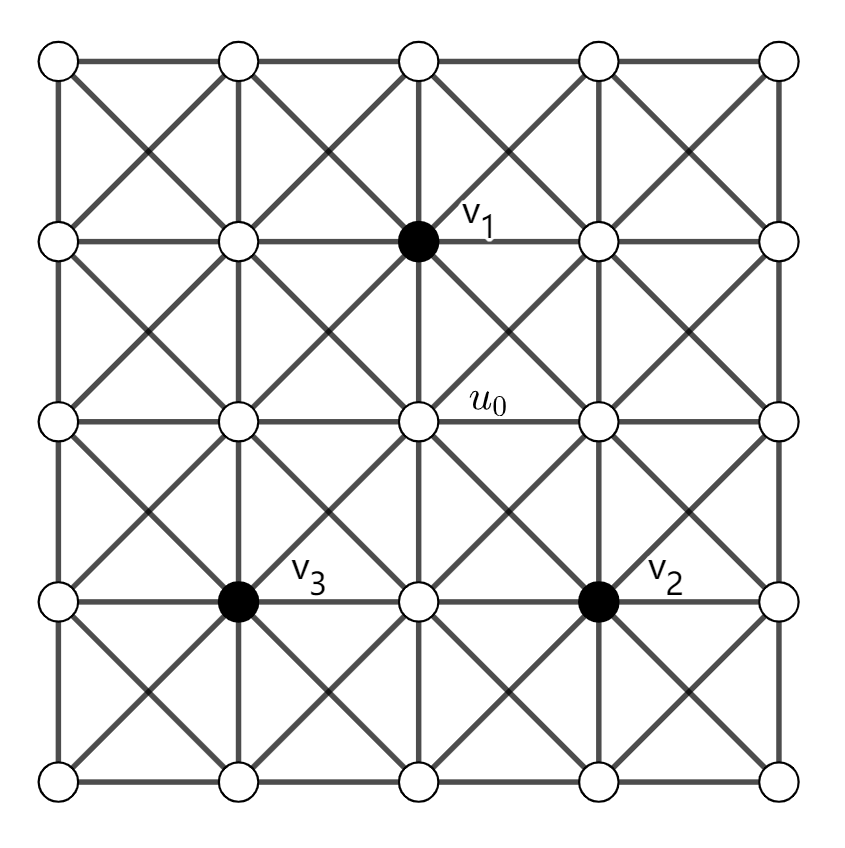}
\caption{Case 3 of the vertex $u_0$ in $T_3\backslash I$}
\label{c3}
\end{figure}

Firstly, It is easy to find that $\{v_1,v_2,v_3\}=N(u_0)\cap S$. This case is rather complicated. For those $u_0$ whose charge $\ch_4(u_0)$ is less than 1, we call them \textit{deficient vertices} and we need to find a \textit{rich friend} for each of them and give the discharge rule $g_3$ between the deficient vertices and their rich friends. Now, we assume $u_0$ is deficient, then
\begin{equation*}
\ch_4(u_0)=r(v_1)+r(v_2)+r(v_3)<1.
\end{equation*}
From Claim \ref{lem2}, we have $r(v_2)+r(v_3)\ge \frac{1}{2}$, then $r(v_1)<\frac{1}{2}$. From Claim \ref{lemr1}, we know that 
\begin{equation*}
    p_0(v_1)=0, m(v_1)\in \{(a-1,b+2),(a+1,b+2)\}.
\end{equation*}
Since $(a+1,b)\notin I$, we know
\begin{equation*}
    m(v_2)\in \{(a,b-2),(a+1,b-2),(a+2,b-2)\}.
\end{equation*}
Similarly,
\begin{equation*}
    m(v_3)\in \{(a,b-2),(a-1,b-2),(a-2,b-2)\}.
\end{equation*}
There are 9 cases for the positions of $m(v_2)$ and $m(v_3)$, but we can kill one case since $m(v_2)\neq m(v_3)$ and kill three other cases by symmetry, so we only need to consider the following five cases.
\begin{case31}
 $m(v_2)=(a+1,b-2), m(v_3)=(a-1,b-2)$.
\end{case31}
In this case, $v_2,v_3\in S_2$. From Claim \ref{lemr1}, we have $r(v_2)+r(v_3)\ge \frac{1}{2}+\frac{1}{2}=1$, which means $u_0$ is not deficient. Contradiction.
\begin{case32}
$m(v_2)=(a+1,b-2),m(v_3)=(a-2,b-2)$.
\end{case32}
In this case, $v_2\in S_2$ and $(a,b-1)\in I(v_2)\cap P(v_3)$. From Claim \ref{lemr1}, we have $r(v_2)+r(v_3)\ge \frac{1}{2}+\frac{1}{2}=1$, which means $u_0$ is not deficient. Contradiction.
\begin{case33}
$m(v_2)=(a,b-2),m(v_3)=(a-1,b-2)$.
\end{case33}
In this case, $u_0$ is possibly deficient. We assign the rich friend $(a,b-1)$ to it. Let
\begin{equation}
\label{g31}
    g_3((a,b-1),u_0)=1,
\end{equation}
then since $(a,b-1)\in I(v_2)\cap I(v_3)$, $\ch_4((a,b-1))\ge 2$ makes sure that $(a,b-1)$ and $u_0$ both have charge $\ch_5$ at least 1. It is trivial to check that one vertex cannot be a rich friend of two different $u_0$ in this case by investigating the neighborhood of the rich friend.
\begin{case34}
$m(v_2)=(a,b-2),m(v_3)=(a-2,b-2)$.
\end{case34}
In this case, $u_0$ is possibly deficient. When $u_0$ is deficient, we assign the rich friend $v_3$ to it. Let
\begin{equation}
\label{g32}
    g_3(v_3,u_0)=\frac{1}{2}.
\end{equation}
We can see that $(a,b-1)\in I\cap P(v_3)$ and $(a,b-2)\in S\cap P(v_3)$. We have $p_0(v_3)\ge 2$, which means
\begin{equation}
\label{c34v}
\begin{aligned}
    \ch_4(v_3)&\ge \frac{9}{2}-2\cdot \frac{1}{2}-p_1(v_3)-\frac{1}{2}p_3(v_3)-\frac{1}{2}p_3(v_3)\\
    &\ge 3-\frac{1}{2}(5-p_0(v_3))\\
    &=\frac{3}{2}.
\end{aligned}
\end{equation}
The charge $\ch_5$ for this case will be discussed after we define $g_3$ completely. For the discussion later, we would like to investigate the rich friend in this case now. Consider those four $\sqrt{2}$-neighbors of the rich friend $v_3$. One is a deficient vertex in Case 3. The opposite vertex of it is the partner $m(v_3)$ of the rich friend. A third one is a vertex in $S$.
\begin{case35}
$m(v_2)=(a+2,b-2),m(v_3)=(a-2,b-2)$.
\end{case35}
In this case, $(a,b-2)\in  P(v_2)\cap P(v_3)$. If $(a,b-2)\in I\cup S$, then from Claim \ref{lemr1}, we have $r(v_2)+r(v_3)=1$, which contradicts that $u_0$ is deficient. Also, since $(a,b-1)$ and $(a,b-2)$ are not in $S$ and
\begin{equation*}
    N((a,b-1))\cap S=\{v_2,v_3\}\subset N(a,b-2)\cap S,
\end{equation*}
to locate them, there exists
\begin{equation}
\label{c35c}
v_4\in \{(a-1,b-3),(a,b-3),(a+1,b-3)\}\cap S.    
\end{equation}
$u_0$ is also possibly deficient here. When $u_0$ is deficient, we assign the rich friend $v_4$ to it. Let
\begin{equation}
\label{g33}
    g_3(v_4,u_0)=\frac{1}{2}.
\end{equation}

According to \eqref{c35c}, we need to consider the following two cases by symmetry.
\begin{case351}
$v_4=(a-1,b-3)$.
\end{case351}
One useful fact is that 
\begin{equation}
\label{3511}
    m(v_4)\in \{(a-2,b-3),(a-2,b-4),(a-1,b-4),(a,b-4)\},
\end{equation}
since $(a,b-2)\notin I$. Consider those four $\sqrt{2}$-neighbor of the rich friend $v_4$. One vertex $s_1$ is in $T_3\backslash(I\cup S)$ in Case 2. Another non-opposite vertex $s_2$ is in $S$ and is not the partner $m(v_4)$,

Another useful fact is that 
\begin{equation}
\label{3512}
I(s_2)\cap N(v_4)= N(s_2)\cap N(s_1)\cap N(v_4).
\end{equation}
\begin{case352}
$v_4=(a,b-3)$.
\end{case352}
One useful fact is that 
\begin{equation}
\label{3521}
    m(v_4)\in \{(a-1,b-4),(a,b-4),(a+1,b-4)\}.
\end{equation}
since $(a,b-2)\notin I$. Consider those four $\sqrt{2}$-neighbors of the rich friend $v_4$. Two of them are vertices in $T_3\cap I$. They are not in the opposite position of each other.

Now, we have finished the definition of $g_3$, which is given by \eqref{g31}, \eqref{g32} and \eqref{g33}. If $\ch_5$ is at least $1$ everywhere in the kind grid. Under the definition of density on an infinite graph, it is not hard to find that the average charge for $\ch_2$ is the left side of \eqref{eqthm2} and the average charge for $\ch_5$ is at least 1. The average charge is fixed under the discharge rule, so Theorem \ref{eqthm2} holds. The remaining part of this section is to prove the following claim.
\begin{claim}
\label{lemmain}
Under the discharge rule $g_3$, the charge $\ch_5$ is at least one everywhere.
\end{claim}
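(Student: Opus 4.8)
The plan is to verify $\ch_5(v)\ge 1$ for every vertex $v$ by sorting vertices according to the role they play under $g_3$. Since $g_3$, defined by \eqref{g31}, \eqref{g32} and \eqref{g33}, only moves charge from the rich friends (each of which lies in $S\cup I$) to the deficient vertices of Case 3, there are three situations, and the first two are immediate. If $v$ is neither a rich friend nor a deficient Case-3 vertex, then $\ch_5(v)=\ch_4(v)$, and $\ch_4(v)\ge 1$ is already known: for $v\in S$ this is built into the definition of $g_2$; for $v\in T_1\cup T_2\cup I$ it follows from Claim \ref{cl2} together with the fact that $g_2$ removes charge only from $S$; and for $v\in T_3\setminus I$ that is not deficient it is exactly Cases 1 and 2 (Claim \ref{clcase2}) and the non-deficient vertices of Case 3. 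If $u_0$ is a deficient Case-3 vertex, it receives $1$ from its rich friend in Case 3.3 and $1/2$ in Cases 3.4 and 3.5; since $\ch_4(u_0)=r(v_1)+r(v_2)+r(v_3)\ge r(v_2)+r(v_3)\ge \tfrac12$ by Claim \ref{lem2} (and $\ch_4(u_0)\ge0$ trivially), we get $\ch_5(u_0)\ge 1$ in every case.

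The core of the argument is the remaining situation: a rich friend must stay at least $1$ after donating. I would establish two facts, case by case. First, a surplus bound: $\ch_4(v)\ge 2$ when $v$ is a Case-3.3 rich friend, and $\ch_4(v)\ge \tfrac32$ when $v$ is a Case-3.4 or Case-3.5 rich friend. In Case 3.3 the rich friend $(a,b-1)$ lies in $I(v_2)\cap I(m(v_2))\cap I(v_3)\cap I(m(v_3))$ and $(a,b-1)\notin S$, so four distinct vertices of $S$ each send it $\tfrac12$ under $g_1$ and $\ch_3((a,b-1))\ge 2$; in Case 3.4 it is the estimate \eqref{c34v}; in Cases 3.5.1 and 3.5.2 it has to be dug out from the constraints \eqref{3511} and \eqref{3521} on the position of $m(v_4)$ together with the identity \eqref{3512}, which force $v_4$ to retain enough redundant pendent and interval neighbors after $g_1$, exactly in the spirit of Claims \ref{lemr1} and \ref{lemr2}. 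Second, a load bound: each vertex is the rich friend of at most one deficient vertex in total, so it gives away at most $1$ (Case 3.3) or at most $\tfrac12$ (Cases 3.4, 3.5). This is proved by inspecting the four $\sqrt2$-neighbors of the rich friend in each case: in Case 3.3 the neighborhood of $(a,b-1)$ already contains four vertices of $S$ and the deficient $u_0$, leaving no room for a second claimant; in Case 3.4 three of the four $\sqrt2$-neighbors of $v_3$ are pinned down as the deficient $u_0$, its opposite $m(v_3)$, and a vertex of $S$, so a second use as a Case-3.4 rich friend would force the matching partner into an impossible position; and in Cases 3.5.1 and 3.5.2 the analogous descriptions (a Case-2 $\sqrt2$-neighbor $s_1$ and a non-opposite $\sqrt2$-neighbor $s_2\in S$ in 3.5.1; two $T_3\cap I$ vertices in non-opposite position in 3.5.2) rule out a second deficient vertex drawing on $v_4$. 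Finally one notes these descriptions are mutually exclusive: a Case-3.3 rich friend lies in $I\setminus S$ whereas a Case-3.4/3.5 rich friend lies in $S$, and the $\sqrt2$-neighbor patterns separate the Case-3.4 and Case-3.5 rich friends, so no vertex is a rich friend in two of the cases. Combining the surplus and load bounds gives $\ch_5(v)\ge 1$.

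The main obstacle will be Cases 3.5.1 and 3.5.2. There, unlike Case 3.4, the rich friend $v_4$ is not even adjacent to the deficient $u_0$, so its surplus is not visible locally and must be reconstructed from the configuration forced by $m(v_2)=(a+2,b-2)$, $m(v_3)=(a-2,b-2)$, the vertex $v_4$ supplied by \eqref{c35c}, the restricted positions of $m(v_4)$, and the locating requirements around $(a,b-1)$ and $(a,b-2)$; and the accompanying bookkeeping that no vertex is overloaded is precisely what the facts \eqref{3511}, \eqref{3512} and \eqref{3521} about the $\sqrt2$-neighbors of $v_4$ are designed to feed. I expect Cases 1, 2 and 3.1--3.4 to go through routinely with the claims already established.
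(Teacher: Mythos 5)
Your treatment of the easy parts matches the paper: deficient Case-3 vertices keep $\ch_4\ge\tfrac12$ by Claim \ref{lem2} and are topped up to $1$; the Case-3.3 rich friend lies in $I\setminus S$, has $\ch_4\ge 2$, and cannot serve in any other role; and the mutual exclusivity of the Case-3.3 role versus the Case-3.4/3.5 roles, and of type 3.5.1 versus 3.5.2 (via \eqref{3511} and \eqref{3521}), is argued as in the paper. But your central ``load bound'' --- that every rich friend serves at most one deficient vertex, so a Case-3.5 rich friend donates at most $\tfrac12$ --- is a genuine gap, and in fact appears to be false. The constraints \eqref{3511}, \eqref{3512}, \eqref{3521} do \emph{not} rule out a second deficient vertex drawing on $v_4$: for instance, a Case-3.5.2 rich friend $v_4=(a,b-3)$ with $m(v_4)=(a-1,b-4)$ is compatible with a second deficient vertex at $(a+3,b-3)$ (the two configurations are in perpendicular directions, and the two copies of \eqref{3521} intersect in exactly that partner position). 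The paper's proof accordingly does not attempt your exclusion; it explicitly handles rich friends of type B (3.5.1) and type C (3.5.2) serving \emph{two} deficient vertices, and compensates for the total donation of $1$ by showing the configuration forces extra redundant neighbors ($i_0(v_0)+p_0(v_0)\ge 2$ in the doubled type-B case, $p_0(v_0)\ge 3$ in the doubled type-C case, with $v_0\in S_2$ in the former), after which a refined estimate of $\ch_4(v_0)$ minus the donations yields $\ch_5(v_0)\ge 1$.

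Relatedly, your claimed uniform surplus $\ch_4(v)\ge\tfrac32$ for Case-3.5 rich friends is both unproved in your sketch (you defer it to being ``dug out from the constraints,'' which is exactly where the work lies, since $v_4$ is not adjacent to $u_0$) and insufficient once double service is admitted, where one needs the donation-adjusted charge to survive a loss of $1$, not $\tfrac12$. The paper avoids a standalone surplus constant altogether: it bounds $\ch_4(v_0)$ in terms of $i_0(v_0)$, $p_0(v_0)$, $p_1(v_0)$ directly in each service pattern (one or two deficient vertices, type B or C), using the $\sqrt2$-neighbor descriptions only to certify how many neighbors of $v_0$ lie in $S\cup I$ and to cap the number of served deficient vertices at two. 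To repair your argument you would either have to prove the single-service claim (which the example above suggests you cannot) or replace the load bound by the paper's case analysis of doubled service with the corresponding strengthened lower bounds on $i_0+p_0$.
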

\begin{proof}[Proof of Claim \ref{lemmain}]
We have shown that $\ch_4$ is at least 1 for all vertices except for the deficient vertices in Case 3.3, Case 3.4, and Case 3.5. It suffices to prove that $\ch_5$ is at least one for those related vertices, which are all deficient vertices and their rich friends. In Case 3, by Claim \ref{lem2} we have 
\begin{equation*}
    \ch_4(u_0)\ge r(v_2)+r(v_3)\ge \frac{1}{2}.
\end{equation*}
From \eqref{g31}, \eqref{g32} and \eqref{g33}, we know
\begin{equation*}
    \ch_5(u_0)\ge ch_4(u_0)+\frac{1}{2}\ge 1,
\end{equation*}
which means all deficient vertices have charge $\ch_5$ at least one. 

The rich friend in Case 3.3 is not in $S$, so it cannot be a rich friend in any other case at the same time. We have shown its charge is at least one. Consider other rich friends. For convenience, we call the rich friends in Case 3.4 type A rich friend, call the rich friends in Case 3.5.1 type B rich friend, and call the rich friends in Case 3.5.2 type C rich friend. We have given the observations on the four $\sqrt{2}$-neighbors for these three types of rich friends. 

Suppose $v_0$ is a rich friend of type A, then the observations about those three $\sqrt{2}$-neighbors in Case 3.3 prevent $v_0$ from being a rich friend of type B or C and prevent $v_0$ from being a rich friend of type A for two different deficient vertices. It means type A rich friends have charge $\ch_5$ at least one because of \eqref{c34v} and \eqref{g32}.

Type B rich friend has one $\sqrt{2}$-neighbor in $S$ and one $\sqrt{2}$-neighbor in $T_3\backslash (S\cup I)$. They are not in the opposite position of each other and the deficient vertex is determined by these two $\sqrt{2}$-neighbors. Type C rich friend has two $\sqrt{2}$-neighbors in $I\backslash S$. They are not in the opposite position of each other and the deficient vertex is determined by these two $\sqrt{2}$-neighbors. Suppose $v_0$ is a rich friend of type B, then it is in one of the following three cases.  

If $v_0$ is a rich friend of type B and also a rich friend of type C, then from the facts \eqref{3511}\eqref{3521}, we have no valid position for the partner $m(v_0)$, which gives a contradiction.

If $v_0$ is a rich friend of type B for deficient vertex $u_1^{\prime}$ and $v_0$ is also a rich friend of type B for another deficient vertex $u_2^{\prime}$, then from the facts \eqref{3511}\eqref{3512}, it is not hard to find that $v_0\in S_2$ and $v_0$ is not a rich friend for another deficient vertex different from $u_1^{\prime},u_2^{\prime}$. Also, 
\begin{equation*}
    |S\cap N(v_0)|\ge 3
\end{equation*}
indicates $i_0(v_0)+p_0(v_0)\ge 2$. Then,
\begin{equation*}
\begin{aligned}
    \ch_5(v_0)&\ge -g(v_0,u_1^{\prime})-g(v_0,u_2^{\prime})+\ch_4(v_0)\\
    &\ge -1+5-\frac{1}{2}|I(v_0)\backslash S|-p_1(v_0)-\frac{1}{2}p_2(v_0)-\frac{1}{2}p_3(v_0)\\
    &\ge 4-\frac{1}{2}(4-i_0(v_0))-\frac{1}{2}p_1(v_0)-\frac{1}{2}(3-p_0(v_0))\\
    &\ge 0+\frac{1}{2}(i_0(v_0)+p_0(v_0))\\
    &\ge 1.
\end{aligned}
\end{equation*}

If $v_0$ is only a rich friend of type B for deficient vertex $u_1^{\prime}$ and not a rich friend for another deficient vertex different from $u_1^{\prime}$, then the non-partner $S$ neighbor indicates $i_0(v_0)+p_0(v_0)\ge 1$. Then,
\begin{equation*}
\begin{aligned}
    \ch_5(v_0)&\ge -g(v_0,u_1^{\prime}) +\ch_4(v_0)\\
    &\ge -\frac{1}{2}+5-\frac{1}{2}|I(v_0)\backslash S|-p_1(v_0)-\frac{1}{2}p_2(v_0)-\frac{1}{2}p_3(v_0)\\
    &\ge -\frac{1}{2}+5-\frac{1}{2}(4-i_0(v_0))-\frac{1}{2}p_1(v_0)-\frac{1}{2}(3-p_0(v_0))\\
    &\ge \frac{1}{2}+\frac{1}{2}(i_0(v_0)+p_0(v_0))\\
    &\ge 1.
\end{aligned}
\end{equation*}

Now, we have shown any rich friend of type B has charge $\ch_5$ at least one. Suppose $v_0$ is a rich friend of type C, then we have the following two cases remaining.

If $v_0$ is a rich friend of type C for deficient vertex $u_1^{\prime}$ and $v_0$ is also a rich friend of type C for another deficient vertex $u_2^{\prime}$, then from the facts\eqref{3521}, we know three of those four $\sqrt{2}$-neighbors of $v_0$ are in $I\backslash S$ and the fourth one is the partner $m(v_0)$. Then, $p_0(v_0)\ge 3$ and
\begin{equation*}
\begin{aligned}
    \ch_5(v_0)&\ge -g(v_0,u_1^{\prime})-g(v_0,u_2^{\prime}) +\ch_4(v_0)\\
    &\ge -1+\frac{9}{2}-\frac{1}{2}|I(v_0)\backslash S|-p_1(v_0)-\frac{1}{2}p_2(v_0)-\frac{1}{2}p_3(v_0)\\
    &\ge -1+\frac{9}{2}-\frac{1}{2}*2-\frac{1}{2}p_1(v_0)-\frac{1}{2}(5-p_0(v_0))\\
    &\ge -\frac{1}{2}+\frac{1}{2}(p_0(v_0))\\
    &\ge 1.
\end{aligned}
\end{equation*}

If $v_0$ is only a rich friend of type C for deficient vertex $u_1^{\prime}$ and not a rich friend for another deficient vertex different from $u_1^{\prime}$, then the two $\sqrt{2}$-neighbors in $I\backslash S$ and the fact\eqref{3521} indicates $p_0(v_0)\ge 2$. Then,
\begin{equation*}
\begin{aligned}
    \ch_5(v_0)&\ge -g(v_0,u_1^{\prime}) +\ch_4(v_0)\\
    &\ge -\frac{1}{2}+\frac{9}{2}-\frac{1}{2}|I(v_0)\backslash S|-p_1(v_0)-\frac{1}{2}p_2(v_0)-\frac{1}{2}p_3(v_0)\\
    &\ge -\frac{1}{2}+\frac{9}{2}-\frac{1}{2}*2-\frac{1}{2}p_1(v_0)-\frac{1}{2}(5-p_0(v_0))\\
    &\ge \frac{1}{2}(p_0(v_0))\\
    &\ge 1.
\end{aligned}
\end{equation*}
\end{proof}
We have shown the charge $\ch_5$ is at least one everywhere in $G$, which finishes the proof of Theorem \ref{thm2}.
\end{proof}

%%%%%%%%%%%%%%%
%
% SECTION 5
%
%%%%%%%%%%%%%%%
\section{More LPDS patterns}
To give a lower bound of the density of an optimal LPDS on the king grid, Hussain, Niepel, and Kinawi gave the tiling as shown in Figure \ref{l1}. To be specific,
\begin{equation}
    L_1=\{a+k_1(2,1)+k_2(-3,3)\in \mathbb{Z}^2|a=(0,0) \mbox{ or } (-1,1),(k_1,k_2)\in \mathbb{Z}^2\}.
\end{equation}

\begin{figure}[h!]
\centering
\includegraphics[width=0.75\linewidth]{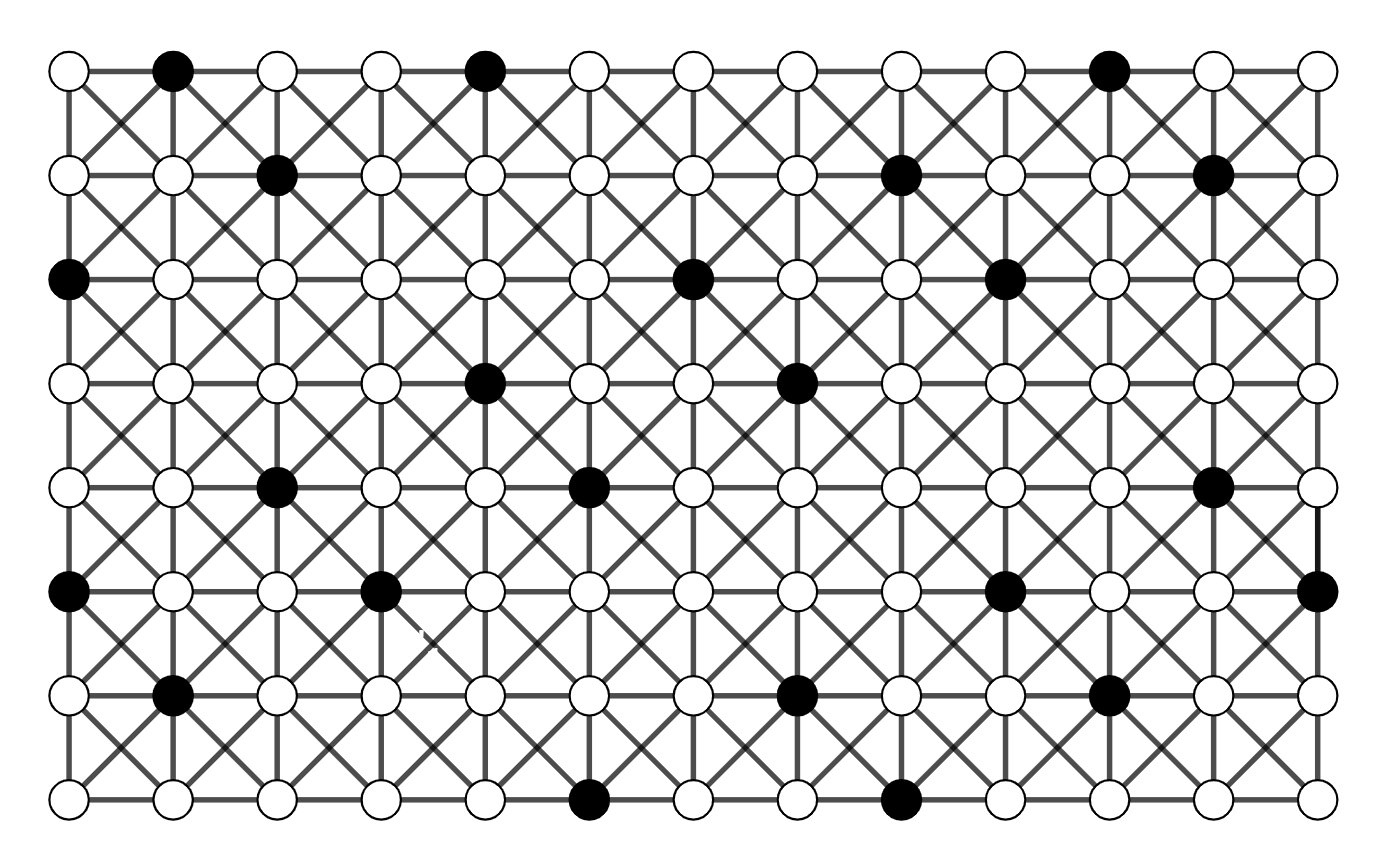}
\caption{ the LPDS $L_1$ in the king grid}
\label{l1}
\end{figure}

It is not hard to find that there are only finite many different LPDS obtained from the symmetric transformation of $L_1$.

Let $L_0=\{(0,0),(0,3),(2,2),(3,1),(4,3),(5,0),(7,1),(7,2)\}$, then a different LPDS is given by
\begin{equation}
    L_2=\{a+k_1(9,0)+k_2(0,4)\in \mathbb{Z}^2|a\in L_0,(k_1,k_2)\in \mathbb{Z}^2\},
\end{equation}
as shown in Figure \ref{l2}.

\begin{figure}[h!]
\centering
\includegraphics[width=0.75\linewidth]{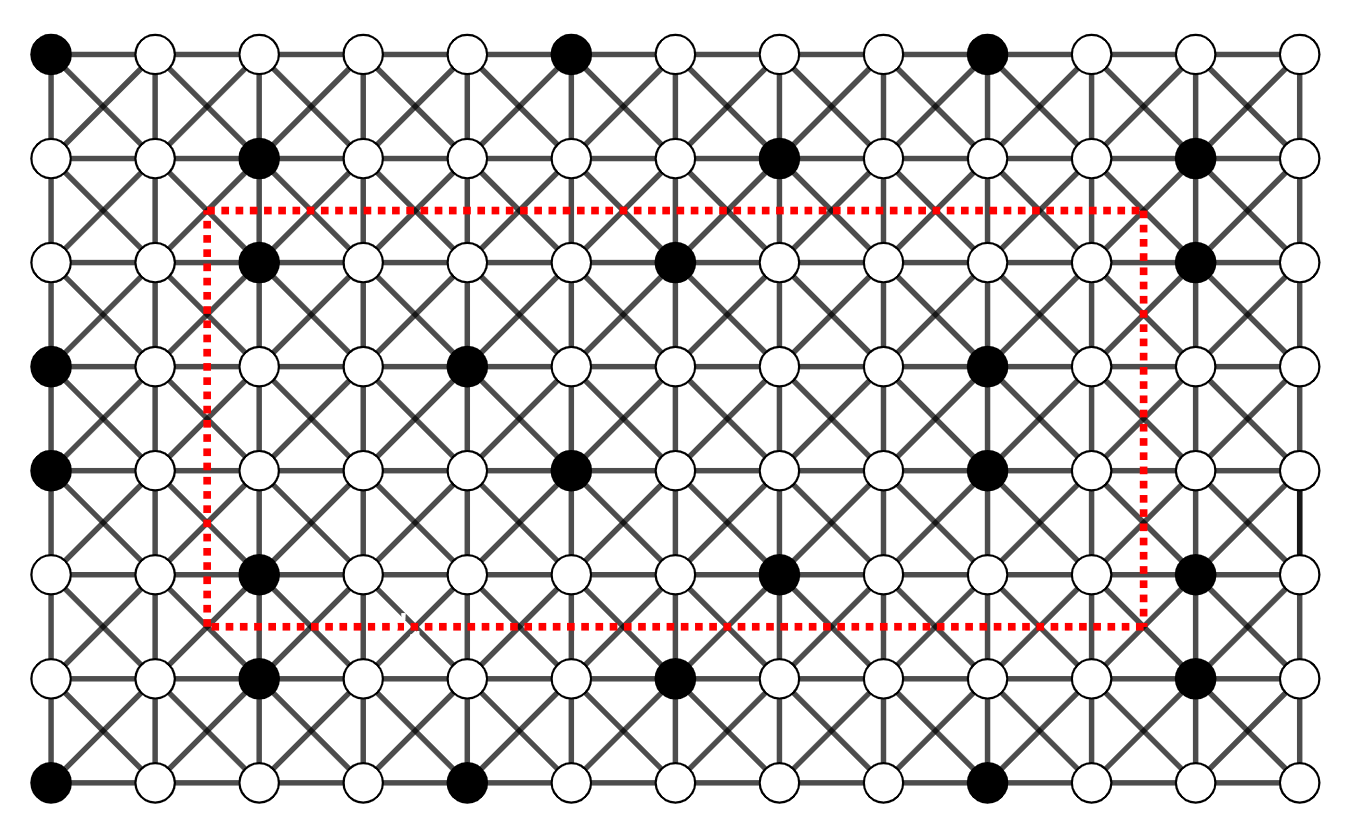}
\caption{ the LPDS $L_2$ in the king grid}
\label{l2}
\end{figure}

It is trivial to check that $L_2$ is a LPDS of the king grid.
\begin{proof}[Proof of Theorem \ref{thm4}]
For the uncountable set $\{X|X\subset \mathbb{Z}\}$, we construct a LPDS 
\begin{equation*}
    L_X=\{a+k_1(9,0)+k_2(0,4)+1_X(k_1)(0,1)|a\in L_0,(k_1,k_2)\in \mathbb{Z}^2\},
\end{equation*}
where $1_X(k_1)=1$ when $k_1\in X$ and $1_X(k_1)=0$ when $k_1\notin X$. Remark that $L_2$ is just $L_{\emptyset}$.

It is not hard to check that $L_X$ is a LPDS with density $\frac{2}{9}$ in the king grid. Apparently, $X\rightarrow L_X$ is injective.
\end{proof}
Remark that the cardinality of the automorphism group of the king grid is countable, so we can also say there are uncountable many non-isomorphic LPDS in the king grid.

It is interesting to see that $T_3\backslash I$ is empty for all the examples in this section, but, as you can see, $T_3\backslash I$ is the main difficulty in the proof of Theorem \ref{thm2} in the last section. This fact may be a hint for future research.

\section*{Acknowledgements}
The author would like to thank Mei Lu for introducing related topics and providing useful comments.

%%%%%%%%%%
%
% REFERENCES
%
%%%%%%%%%%

\end{document}